\theoremstyle{plain}
\newtheorem{ozn}{Definition}[section]
\newtheorem{thm}{Theorem}[section]
\newtheorem{nas}{Corollary}[section]
\newtheorem{zau}{Remark}[section]
\newtheorem{lema}{Lemma}[section]
\newtheorem{pry}{Example}[section]
\newcommand{\eps}{\varepsilon}
\newcommand{\me}{\mathbf}
\newcommand{\mr}{\mathbb}
\newcommand{\mt}{\mathsf}
\newcommand{\md}{\mathcal}
\newcommand{\ld}{\left}
\newcommand{\rd}{\right}
\newcommand{\ip}{\int_{-\pi}^{\pi}}
\newcommand{\be}{\begin{equation}}
\newcommand{\ee}{\end{equation}}
\newcommand{\bem}{\begin{multline}}
\newcommand{\eem}{\end{multline}}
\newcommand{\bml}{\begin{multline*}}
\newcommand{\eml}{\end{multline*}}
\newcommand{\beg}{\begin{gather}}
\newcommand{\eeg}{\end{gather}}
\begin{document}

\title{Robust filtering of sequences with periodically stationary multiplicative seasonal  increments}

\author{
Maksym Luz\thanks {BNP Paribas Cardif, Kyiv, Ukraine, maksym.luz@gmail.com},
Mikhail Moklyachuk\thanks
{Department of Probability Theory, Statistics and Actuarial
Mathematics, Taras Shevchenko National University of Kyiv, Kyiv 01601, Ukraine, moklyachuk@gmail.com}
}

\date{\today}

\maketitle

\renewcommand{\abstractname}{Abstract}
\begin{abstract}
We study  stochastic sequences $\xi(k)$ with periodically stationary generalized multiple increments of fractional order which combines cyclostationary, multi-seasonal, integrated and fractionally integrated patterns. We solve the filtering problem for linear functionals
constructed from unobserved values of a stochastic sequence $\xi(k)$  based on observations with the periodically stationary noise sequence.
For sequences with known matrices of spectral densities, we obtain formulas for calculating values of the mean square errors and the spectral characteristics of the optimal estimates of the functionals.
Formulas that determine the least favorable spectral densities and minimax (robust) spectral
characteristics of the optimal linear estimates of the functionals
are proposed in the case where spectral densities of sequences
are not exactly known while some sets of admissible spectral densities are given.
\end{abstract}

\vspace{2ex}
\textbf{Keywords}:{Periodically stationary sequence, SARFIMA, fractional integration, optimal linear
estimate, mean square error, least favourable spectral density, minimax spectral characteristic}

\maketitle

\vspace{2ex}
\textbf{\bf AMS 2010 subject classifications.} Primary: 60G10, 60G25, 60G35, Secondary: 62M20, 62P20, 93E10, 93E11

\theoremstyle{plain}

\section{Introduction}

Non-stationary and long memory time series models are wildly used in different fields of economics, finance, climatology, air pollution, signal processing etc.
(see, for example, papers by Dudek and Hurd [9], Johansen and Nielsen  [22], Reisen et al. [42]).
A core example -- a general multiplicative model, or SARIMA $(p, d, q)\times(P, D, Q)_s$ -- was introduced in the book by Box and Jenkins [5].
It includes both  integrated and  seasonal factors:
\begin{equation}
\Psi (B^s) \psi (B) (1-B)^d (1-B^s)^Dx_t = \Theta (B^s) \theta (B)\eps_t,
\end{equation}
where $ \Psi (z) $ and $ \Theta (z) $ are two polynomials of degrees of $ P $ and $ Q $ respectively which have roots outside the unit circle.
The parameters $d$ and $D$ are allowed to be fractional. When $|d+D|<1/2$ and $|D|<1/2$, the process  (1) is stationary and invertible.
The paper  by Porter-Hudak [41] illustrates an application of a seasonal ARFIMA model to the analysis of the monetary aggregates used by U.S. Federal Reserve.
Another model of fractional integration is GARMA processes  described by the equation (see Gray,  Cheng and  Woodward [16])
\be
 (1-2uB+B^2) ^ dx_t = \eps_t,\quad |u|\leq1.
\label{GARMA_model}
\ee

For the resent results dedicated to the statistical inference for seasonal long-memory sequences,
we refer  to the paper by Tsai,  Rachinger and  Lin [46], who developed methods of estimation of parameters in case of measurement errors.
In their paper Baillie,   Kongcharoen and  Kapetanios [3] compared MLE and semiparametric estimation procedures for prediction problems based on ARFIMA models.
 Based on simulation study, they indicate better performance of  MLE predictor than the one  based on the two-step local Whittle estimation.
Hassler and  Pohle [19] (see also Hassler [18]) assess a predictive  performance of various methods of  forecasting  of inflation and return volatility time series and show strong evidences for models with a fractional integration component.

Another type of non-stationary processes are periodically correlated, or cyclostationary, processes introduced by Gladyshev [13], which belong to the class of processes with time-dependent spectrum and are widely used in signal processing and communications (see Napolitano [37] for a review of recent works on cyclostationarity and its applications).
Periodic time series are also considered as extension of seasonal models
[2, 4, 28, 38].

One of the fields of interests related to time series analysis is optimal filtering. It aims to remove the unobserved components, such as trends, seasonality or noise signal, from the observed data  [1, 6].

Methods of parameters estimations and filtering usually do not take into account the issues arising from real data, namely, the presence of outliers, measurement errors, incomplete information about the spectral, or model, structure etc. From this point of view, we see an increasing interest to robust methods of estimation that are reasonable in such cases (see
Reisen,  et al. [43], Solci at al. [45] for the examples of robust estimates of SARIMA and PAR models). Grenander [15], Hosoya [20], Kassam [24], Franke [10], Vastola and  Poor [47], Moklyachuk[32, 33] studied minimax  extrapolation, interpolation and filtering problems for stationary sequences and processes.

This article is dedicated to the robust filtering of stochastic sequences with periodically stationary long memory multiple seasonal increments, or sequences with periodically stationary general multiplicative (GM) increments, introduced by Luz and Moklyachuk [31]. In the recent years, models with multiple seasonal and periodic patterns are in scope of interest, see  Dudek [8], Gould et al. [14] and Reisen et al. [42], Hurd and Piparas [21]. This  motivates us to study the time series combining a periodic structure of the covariation function as well as multiple seasonal factors. The discussed problem is a natural continuation of the researches on minimax filtering of stationary vector-valued processes, periodically correlated processes and processes with stationary increments have been performed by  Moklyachuk and Masyutka [34],  Moklyachuk and  Golichenko (Dubovetska) [7],  Luz and Moklyachuk [29, 30]  respectively. We also mention the works [35,36] by Moklyachuk and Sidei, who derive minimax estimates of stationary processes from observations with missed values, and the work [27]  by Moklyachuk and  Kozak, who have studied interpolation problem for stochastic sequences with periodically stationary increments.

The article is organized as follows.
In Section 2, we recall definitions of
 generalized multiple (GM)  increment sequence $\chi_{\overline{\mu},\overline{s}}^{(d)}(\vec{\xi}(m))$ and stochastic
sequences $\xi(m)$ with periodically stationary (periodically correlated, cyclostationary) GM increments.
The spectral theory of vector-valued GM increment sequences is discussed.
Section 3 deals with the classical filtering problem for linear functionals $A\xi$ and $A_N\xi $ which are constructed from unobserved values of the sequence $\xi(m)$ when the spectral densities of the sequence $\xi(m)$ and a noise  sequence $\eta(m)$ are known.
Estimates are obtained by applying the Hilbert space projection technique to the vector sequence $\vec \xi(m)+ \vec \eta(m)$ with stationary GM  increments under the stationary noise sequence $\vec \eta(m)$ uncorrelated with $\vec \xi(m)$.
The case of non-stationary fractional integration is discussed as well.
Section 4 is dedicated to the minimax (robust) estimates in cases, where spectral densities of sequences are not exactly known
while some sets of admissible spectral densities are specified. We illustrate the proposed technique on the particular types of the sets, which are generalizations of the sets of admissible spectral densities described in a survey article by
Kassam and Poor [25] for stationary stochastic processes.

\section{Stochastic sequences with periodically stationary generalized multiple increments}\label{spectral_ theory}

\subsection{Preliminary notations and definitions}
 Consider a   stochastic sequence $\xi(m)$, $m\in\mathbb Z$, and a backward shift operator $B_{\mu}$   with the step $\mu\in
\mathbb Z$, such that $B_{\mu}\xi(m)=\xi(m-\mu)$; $B:=B_1$. Then $B_{\mu}^s=B_{\mu}B_{\mu}\cdot\ldots\cdot B_{\mu}$. Define an incremental operator $\chi_{\overline{\mu},\overline{s}}^{(d)}(B)
=(1-B_{\mu_1}^{s_1})^{d_1}(1-B_{\mu_2}^{s_2})^{d_2}\cdot\ldots\cdot(1-B_{\mu_r}^{s_r})^{d_r}$, where
$d:=d_1+d_2+\ldots+d_r$, $\overline{d}=(d_1,d_2,\ldots,d_r)\in (\mr N^*)^r$,
 $\overline{s}=(s_1,s_2,\ldots,s_r)\in (\mr N^*)^r$
and $\overline{\mu}=(\mu_1,\mu_2,\ldots,\mu_r)\in (\mr N^*)^r$ or $\in (\mr Z\setminus\mr N)^r$. Here $\mr N^*=\mr N\setminus\{0\}$.

Within the article, $\delta_{lp}$ denotes Kronecker symbols, ${n \choose l}=\frac{n!}{l!(n-l)!}$.

\begin{ozn}\label{def_multiplicative_Pryrist}
For a stochastic sequence $\xi(m)$, $m\in\mathbb Z$, the
sequence
\begin{eqnarray}
\nonumber
\chi_{\overline{\mu},\overline{s}}^{(d)}(\xi(m))&:=&\chi_{\overline{\mu},\overline{s}}^{(d)}(B)\xi(m)
=(1-B_{\mu_1}^{s_1})^{d_1}(1-B_{\mu_2}^{s_2})^{d_2}\cdots(1-B_{\mu_r}^{s_r})^{d_r}\xi(m)
\\&=&\sum_{l_1=0}^{d_1}\ldots \sum_{l_r=0}^{d_r}(-1)^{l_1+\dots+ l_r}{d_1 \choose l_1}\cdots{d_r \choose l_r}\xi(m-\mu_1s_1l_1-\cdots-\mu_rs_rl_r)
\label{GM_Pryrist}
\end{eqnarray}
is called \emph{stochastic  generalized multiple (GM)  increment sequence} of differentiation   order
$d$
with a fixed seasonal  vector $\overline{s}\in (\mr N^*)^r$
and a varying step $\overline{\mu}\in (\mr N^*)^r$ or $\in (\mr Z\setminus\mr N)^r$.
\end{ozn}

 The  multiplicative increment operator $\chi_{\overline{\mu},\overline{s}}^{(d)}(B)$ admits the representation
\[
\chi_{\overline{\mu},\overline{s}}^{(d)}(B)
=\prod_{i=1}^r(1-B_{\mu_i}^{s_i})^{d_i}
=\sum_{k=0}^{n(\gamma)}e_{\gamma}(k)B^k,
\]
where $n(\gamma):=\sum_{i=1}^r\mu_is_id_i$. The explicit representation of the coefficients $e_{\gamma}(k)$ is given in [31].

\begin{ozn}
\label{oznStPryrostu}
A stochastic GM increment sequence $\chi_{\overline{\mu},\overline{s}}^{(d)}(\xi(m))$  is called  wide sense
stationary if the mathematical expectations
\begin{eqnarray*}
\mt E\chi_{\overline{\mu},\overline{s}}^{(d)}(\xi(m_0))& = &c^{(d)}_{\overline{s}}(\overline{\mu}),
\\
\mt E\chi_{\overline{\mu}_1,\overline{s}}^{(d)}(\xi(m_0+m))\chi_{\overline{\mu}_2,\overline{s}}^{(d)}(\xi(m_0))
& = & D^{(d)}_{\overline{s}}(m;\overline{\mu}_1,\overline{\mu}_2)
\end{eqnarray*}
exist for all $m_0,m,\overline{\mu},\overline{\mu}_1,\overline{\mu}_2$ and do not depend on $m_0$.
The function $c^{(d)}_{\overline{s}}(\overline{\mu})$ is called mean value  and the function $D^{(d)}_{\overline{s}}(m;\overline{\mu}_1,\overline{\mu}_2)$ is
called structural function of the stationary GM increment sequence (of a stochastic sequence with stationary GM increments).
\\
The stochastic sequence $\xi(m)$, $m\in\mathbb   Z$
determining the stationary GM increment sequence
$\chi_{\overline{\mu},\overline{s}}^{(d)}(\xi(m))$ by   (3) is called stochastic
sequence with stationary GM increments (or GM increment sequence of order $d$).
\end{ozn}

\begin{zau}
The particular case of the one-pattern increment sequence $\chi_{\mu,1}^{(n)}(\xi(m)):=\xi^{(n)}(m,\mu)=(1-B_{\mu})^n\xi(m)$ and the continues time increment process $\xi^{(n)}(t,\tau)=(1-B_{\tau})^n\xi(t)$ were studied in [40, 48, 49].
\end{zau}

\subsection{Definition and spectral representation of periodically stationary GM increment}

In this section, we present definition, justification and a brief review of the spectral theory of stochastic sequences with periodically stationary multiple seasonal increments.

\begin{ozn}
\label{OznPeriodProc}
A stochastic sequence $\xi(m)$, $m\in\mathbb Z$ is called stochastic
sequence with periodically stationary (periodically correlated) GM increments with period $T$ if the mathematical expectations
\begin{eqnarray*}
\mt E\chi_{\overline{\mu},T\overline{s}}^{(d)}(\xi(m+T)) & = & \mt E\chi_{\overline{\mu},T\overline{s}}^{(d)}(\xi(m))=c^{(d)}_{T\overline{s}}(m,\overline{\mu}),
\\
\mt E\chi_{\overline{\mu}_1,T\overline{s}}^{(d)}(\xi(m+T))\chi_{\overline{\mu}_2,T\overline{s}}^{(d)}(\xi(k+T))
& = & D^{(d)}_{T\overline{s}}(m+T,k+T;\overline{\mu}_1,\overline{\mu}_2)
= D^{(d)}_{T\overline{s}}(m,k;\overline{\mu}_1,\overline{\mu}_2)
\end{eqnarray*}
exist for every  $m,k,\overline{\mu}_1,\overline{\mu}_2$ and  $T>0$ is the least integer for which these equalities hold.
\end{ozn}

It follows from  Definition 2.3 that the sequence
\begin{equation}
\label{PerehidXi}
\xi_{p}(m)=\xi(mT+p-1), \quad p=1,2,\dots,T; \quad m\in\mathbb Z
\end{equation}
forms a vector-valued sequence
$\vec{\xi}(m)=\left\{\xi_{p}(m)\right\}_{p=1,2,\dots,T}, m\in\mathbb Z$
with stationary GM increments as follows:
\begin{eqnarray*}
\chi_{\overline{\mu},\overline{s}}^{(d)}(\xi_p(m))&=&\sum_{l_1=0}^{d_1}\ldots \sum_{l_r=0}^{d_r}(-1)^{l_1+\ldots+ l_r}{d_1 \choose l_1}\cdot\ldots\cdot{d_r \choose l_r}\xi_p(m-\mu_1s_1l_1-\ldots-\mu_rs_rl_r)
\\
&=&\sum_{l_1=0}^{d_1}\ldots \sum_{l_r=0}^{d_r}(-1)^{l_1+\ldots+ l_r}{d_1 \choose l_1}\cdot\ldots\cdot{d_r \choose l_r}\xi((m-\mu_1s_1l_1-\ldots-\mu_rs_rl_r)T+p-1)
\\
&=&\chi_{\overline{\mu},T\overline{s}}^{(d)}(\xi(mT+p-1)),\quad p=1,2,\dots,T,
\end{eqnarray*}
where $\chi_{\overline{\mu},\overline{s}}^{(d)}(\xi_p(m))$ is the GM increment of the $p$-th component of the vector-valued sequence $\vec{\xi}(m)$.

\begin{pry}
Define a periodic seasonal autoregressive integrated moving average model (PSARIMA) $\{X_m,m\in \mr Z\}$,  with multiple seasonal patterns by relation
\[
\phi_m(B)(1-B^T)^d\prod_{i=1}^r\Phi_{i,m}(B)(1-B^{Ts_i})^{d_i}X_m
=\theta_m(B) \prod_{i=1}^r\Theta_{i,m}(B) \eps_m,
\]
where all  polynomials $\phi_m(z)$, $\theta_m(z)$, $\Phi_{i,m}(z)$, $\Theta_{i,m}(z)$ are $T$-periodic by parameter  $m$ functions, $1<s_1<\ldots<s_r$.
Define
\begin{eqnarray*}
\Phi_m(z)& := &\phi_m(z)\prod_{i=1}^r\Phi_{i,m}(z)=\sum_{k=0}^{q_1}\Phi_m(k)z^k,
\\
\Theta_m(z)& := &\theta_m(z) \prod_{i=1}^r\Theta_{i,m}(z)=\sum_{k=0}^{q_2}\Theta_m(k)z^k
\end{eqnarray*}
 and put $\Phi_m(k)=0$ for $k>q_1$, $\Theta_m(k)=0$ for $k>q_2$. Then the increment sequence $$Y_m=(1-B^T)^d\prod_{i=1}^r(1-B^{Ts_i})^{d_i}X_m$$
 is periodically stationary and allows a stationary vector representation
$$\me Y_m=(1-B)^d\prod_{i=1}^r(1-B^{s_i})^{d_i}\me X_m$$ with
$$\me Y_m=(Y_{mT},Y_{mT+1},\ldots, Y_{mT+T-1})^{\top}, \quad\me X_m=(X_{mT},X_{mT+1},\ldots, X_{mT+T-1})^{\top},$$ $\boldsymbol{\eps}_m=(\eps_{mT},\eps_{mT+1},\ldots, \eps_{mT+T-1})^{\top}$.
We can write the relation
\[
\me  \Pi \me Y_m+\sum_{l=1}^{q_1^*}\me \Pi_l\me Y_{m-l}=\me \Xi \boldsymbol{\eps}_m+\sum_{l=1}^{q_2^*}\me \Xi_l\boldsymbol{\eps}_{m-l},
\]
where $\me \Pi(k,j)=\Phi_k(k-j)$, $\me \Xi(k,j)=\Theta_k(k-j)$ for $k\geq j$, $\me \Pi(k,j)=0$, $\me \Xi(k,j)=0$ otherwise.
$\me \Pi_l(k,j)=\Phi_k(lT+k-j)$, $\me \Xi_l(k,j)=\Theta_k(lT+k-j)$ [9], provided
$det(\me \Pi  +\sum_{l=1}^{q_1^*}\me \Pi_lz^l)\neq 0$ for $|z|\leq 1$ [17].
A GM increment sequence is defined as
$$\chi_{\overline{\mu},\overline{s}}^{(d)}(\me X_m)=(1-B^{\mu_0})^d\prod_{i=1}^r(1-B^{s_i\mu_i})^{d_i}\me X_m,\,\, m\in \mr Z.$$
\end{pry}

The following theorem describes the spectral structure of the GM increment  [23], [31].

\begin{thm}\label{thm1}
1. The mean value and the structural function
 of the vector-valued stochastic stationary
GM increment sequence $\chi_{\overline{\mu},\overline{s}}^{(d)}(\vec{\xi}(m))$ can be represented in the form
\begin{eqnarray}
\label{serFnaR_vec}
c^{(d)}_{ \overline{s}}(\overline{\mu})& = &c\prod_{i=1}^r\mu_i^{d_i},
\\
\label{strFnaR_vec}
 D^{(d)}_{\overline{s}}(m;\overline{\mu}_1,\overline{\mu}_2)& = &\int_{-\pi}^{\pi}e^{i\lambda
m} \chi_{\overline{\mu}_1}^{(d)}(e^{-i\lambda})\chi_{\overline{\mu}_2}^{(d)}(e^{i\lambda})\frac{1}
{|\beta^{(d)}(i\lambda)|^2}dF(\lambda),
\end{eqnarray}
where
\[\chi_{\overline{\mu}}^{(d)}(e^{-i\lambda})=\prod_{j=1}^r(1-e^{-i\lambda\mu_js_j})^{d_j}, \quad \beta^{(d)}(i\lambda)= \prod_{j=1}^r\prod_{k_j=-[s_j/2]}^{[s_j/2]}(i\lambda-2\pi i k_j/s_j)^{d_j},
\]
 $c$ is a vector, $F(\lambda)$ is the matrix-valued spectral function of the stationary stochastic sequence $\chi_{\overline{\mu},\overline{s}}^{(d)}(\vec{\xi}(m))$. The vector $c$
and the matrix-valued function $F(\lambda)$ are determined uniquely by the GM
increment sequence $ \chi_{\overline{\mu},\overline{s}}^{(d)}(\vec \xi(m))$.

2. The stationary GM increment sequence $\chi_{\overline{\mu},\overline{s}}^{(d)}(\vec{\xi}(m))$ admits the spectral representation
\begin{equation}
\label{SpectrPred_vec}
\chi_{\overline{\mu},\overline{s}}^{(d)}(\vec{\xi}(m))
=\int_{-\pi}^{\pi}e^{im\lambda}\chi_{\overline{\mu}}^{(d)}(e^{-i\lambda})\frac{1}{\beta^{(d)}(i\lambda)}d\vec{Z}_{\xi^{(d)}}(\lambda),
\end{equation}
where $d\vec{Z}_{\xi^{(d)}}(\lambda)=\{Z_{ p}(\lambda)\}_{p=1}^{T}$ is a (vector-valued) stochastic process with uncorrelated increments on $[-\pi,\pi)$ connected with the spectral function $F(\lambda)$ by
the relation
\[
 \mt E(Z_{p}(\lambda_2)-Z_{p}(\lambda_1))(\overline{ Z_{q}(\lambda_2)-Z_{q}(\lambda_1)})
 =F_{pq}(\lambda_2)-F_{pq}(\lambda_1),\]
 \[  -\pi\leq \lambda_1<\lambda_2<\pi,\quad p,q=1,2,\dots,T.
\]
\end{thm}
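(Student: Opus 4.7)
The plan is to reduce everything to a stationary vector-valued sequence, invoke the standard Herglotz/Bochner spectral theorem in the matrix setting, and then extract the $\overline{\mu}$-dependence through the symbol of the multiplicative increment operator. I would proceed in three steps.

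Step one: the mean value formula (\ref{serFnaR_vec}). Using the scalar factorization $1-B_{\mu_i}^{s_i}=(1-B^{s_i})\sum_{k=0}^{\mu_i-1}B^{ks_i}$ and the commutativity of the factors $(1-B_{\mu_j}^{s_j})$, I would iterate this identity $d_i$ times in each coordinate and take expectations. Stationarity of the increments, i.e.\ the invariance of $c^{(d)}_{\overline{s}}(\overline{\mu})$ under time shifts, forces the $\mu_i$-dependent sum $\sum_{k=0}^{\mu_i-1}B^{ks_i}$ to act as multiplication by $\mu_i$ on the constant vector $\mt E\,\chi^{(d)}_{\overline{e},\overline{s}}(\vec{\xi}(0))$, where $\overline{e}$ is the vector of ones. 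Iterating over $i=1,\ldots,r$ gives $c^{(d)}_{\overline{s}}(\overline{\mu})=c\prod_{i=1}^r\mu_i^{d_i}$ with $c:=\mt E\,\chi^{(d)}_{\overline{e},\overline{s}}(\vec{\xi}(0))$.

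Step two: the spectral representation of the structural function (\ref{strFnaR_vec}). Since $\chi_{\overline{\mu},\overline{s}}^{(d)}(\vec{\xi}(m))$ is a wide-sense stationary vector sequence, the matrix Herglotz theorem yields a matrix spectral measure $G_{\overline{\mu}_1,\overline{\mu}_2}$ such that $D^{(d)}_{\overline{s}}(m;\overline{\mu}_1,\overline{\mu}_2)=\int_{-\pi}^{\pi}e^{im\lambda}\,dG_{\overline{\mu}_1,\overline{\mu}_2}(\lambda)$. Next, I would exploit the fact that the time-domain operator $\chi_{\overline{\mu},\overline{s}}^{(d)}(B)$ has frequency-domain symbol $\chi_{\overline{\mu}}^{(d)}(e^{-i\lambda})=\prod_{j=1}^r(1-e^{-i\lambda\mu_js_j})^{d_j}$. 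Applying this symbol relation to both factors in the covariance and comparing two different choices of step vectors at the same lag $m$ gives, on the level of measures,
\[
dG_{\overline{\mu}_1,\overline{\mu}_2}(\lambda)=\chi_{\overline{\mu}_1}^{(d)}(e^{-i\lambda})\,\overline{\chi_{\overline{\mu}_2}^{(d)}(e^{-i\lambda})}\,dH(\lambda)
\]
for some matrix measure $H$ independent of the steps. To identify $dH(\lambda)=|\beta^{(d)}(i\lambda)|^{-2}dF(\lambda)$ one observes that the zeros of every symbol $\chi_{\overline{\mu}}^{(d)}(e^{-i\lambda})$ common to all admissible $\overline{\mu}$ sit precisely at the seasonal frequencies $\lambda=2\pi k_j/s_j$, $k_j=-[s_j/2],\ldots,[s_j/2]$, each of multiplicity $d_j$; the polynomial $\beta^{(d)}(i\lambda)$ is the minimal factor reproducing these fixed zeros, so that $\chi_{\overline{\mu}}^{(d)}(e^{-i\lambda})/\beta^{(d)}(i\lambda)$ is bounded and its remaining zeros depend on $\overline{\mu}$ in a way compatible with the $\mu_i^{d_i}$ scaling found in Step~one. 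Integrability of $dH$ together with positive semi-definiteness then defines the matrix-valued spectral function $F$ uniquely and independently of $\overline{\mu}$.

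Step three: the spectral representation (\ref{SpectrPred_vec}) of the increment sequence itself. The vector spectral theorem gives an orthogonal stochastic measure $\vec{W}_{\overline{\mu}}$ on $[-\pi,\pi)$ with $\chi_{\overline{\mu},\overline{s}}^{(d)}(\vec{\xi}(m))=\int_{-\pi}^{\pi}e^{im\lambda}\,d\vec{W}_{\overline{\mu}}(\lambda)$ and covariance $dF_{\overline{\mu},\overline{\mu}}$. I would then define $d\vec{Z}_{\xi^{(d)}}(\lambda):=\beta^{(d)}(i\lambda)/\chi_{\overline{\mu}}^{(d)}(e^{-i\lambda})\,d\vec{W}_{\overline{\mu}}(\lambda)$ and use the factorization of $dG$ from Step~two to verify that the right-hand side does not depend on the choice of $\overline{\mu}$. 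The covariance identity for $\vec{Z}_{\xi^{(d)}}$ stated in the theorem then follows directly from the defining formula for $F$. The main obstacle I anticipate is the careful verification in Step~two that $|\beta^{(d)}(i\lambda)|^{-2}$ is the correct regulariser: one has to check the correct order of vanishing of $\chi_{\overline{\mu}}^{(d)}(e^{-i\lambda})$ at each seasonal frequency, confirm integrability of $dH$ as a positive matrix measure, and rule out any hidden $\overline{\mu}$-dependence of the residual factor; everything else is essentially bookkeeping on top of the matrix Herglotz theorem and the structural computation performed in Step~one.
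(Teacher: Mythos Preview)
The paper does not supply a proof of this theorem; it is stated with the sentence ``The following theorem describes the spectral structure of the GM increment [23], [31]'' and the argument is deferred entirely to those references (Karhunen and the authors' own earlier paper). There is therefore nothing in the present paper to compare your proposal against.

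Your outline is the expected Yaglom-type argument for sequences with stationary increments, adapted to the multiplicative seasonal setting, and is almost certainly what [31] does: the telescoping identity $(1-B^{\mu s})=(1-B^{s})\sum_{k=0}^{\mu-1}B^{ks}$ for the mean, the matrix Herglotz theorem for the covariance, and extraction of the $\overline{\mu}$-dependence through the symbol $\chi_{\overline{\mu}}^{(d)}(e^{-i\lambda})$. The difficulty you flag in Step~2 is indeed the substantive one. One further point to handle carefully in Step~3: the quotient $\beta^{(d)}(i\lambda)/\chi_{\overline{\mu}}^{(d)}(e^{-i\lambda})$ has poles at the $\overline{\mu}$-dependent zeros of $\chi_{\overline{\mu}}^{(d)}$ (those at $\lambda=2\pi k/(\mu_j s_j)$ with $k$ not a multiple of $\mu_j$), which are \emph{not} cancelled by $\beta^{(d)}$. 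Defining $d\vec{Z}_{\xi^{(d)}}$ by that division therefore requires you to first check that $d\vec{W}_{\overline{\mu}}$ assigns no mass to these extra frequencies; this follows from the consistency relations among different step vectors but deserves an explicit line.
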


Consider another vector stochastic sequence with the stationary GM
increments $\vec \zeta (m)=\vec \xi(m)+\vec \eta(m)$, where $\vec\eta(m)$ is a vector stationary stochastic sequence, uncorrelated with $\vec\xi(m)$, with a spectral representation
\[
 \vec\eta(m)=\int_{-\pi}^{\pi}e^{i\lambda m}dZ_{\eta}(\lambda),\]
 $Z_{\eta}(\lambda)=\{Z_{\eta,p}(\lambda)\}_{p=1}^T$, $[-\pi,\pi)$, is a stochastic process with uncorrelated increments, that corresponds to the spectral function $G(\lambda)$ [17].
The stochastic stationary GM increment $\chi_{\overline{\mu},\overline{s}}^{(d)}(\vec{\zeta}(m))$ allows the spectral representation
\begin{eqnarray*}
 \chi_{\overline{\mu},\overline{s}}^{(d)}(\vec{\zeta}(m))&=&\int_{-\pi}^{\pi}e^{i\lambda m}\frac{\chi_{\overline{\mu}}^{(d)}(e^{-i\lambda})}{\beta^{(d)}(i\lambda)}
 dZ_{\xi^{(n)}}(\lambda)
  +\int_{-\pi}^{\pi}e^{i\lambda m}\chi_{\overline{\mu}}^{(d)}(e^{-i\lambda}) dZ_{\eta }(\lambda),
 \end{eqnarray*}
while $dZ_{\eta }(\lambda)=(\beta^{(d)}(i\lambda))^{-1} dZ_{\eta^{(n)}}(\lambda)$,
$\lambda\in[-\pi,\pi)$. Therefore, in the case where the spectral functions $F(\lambda)$ and $G(\lambda)$ have the spectral densities $f(\lambda)$ and $g(\lambda)$, the spectral density $p(\lambda)=\{p_{ij}(\lambda)\}_{i,j=1}^{T}$ of the stochastic sequence $\vec \zeta(m)$ is determined by the formula
\[
 p(\lambda)=f(\lambda)+|\beta^{(d)}(i\lambda)|^2g(\lambda).\]

\subsection{GM fractional increments}\label{fractional_extrapolation}

In the previous subsection, we describe    the  GM increment sequence $\chi_{\overline{\mu},\overline{s}}^{(d)}(\vec{\xi}(m))$ of  the positive integer orders $(d_1,\ldots,d_r)$. Here we consider the case of possibly fractional increment orders $d_i$.

Within the subsection, we put the step $\overline{\mu}=(1,1,\ldots,1)$. Represent the increment operator $\chi_{\overline{s}}^{(d)}(B)$  in the form
\be\label{FM_increment}
\chi_{\overline{s}}^{(R+D)}(B)=(1-B)^{R_0+D_0}\prod_{j=1}^r(1-B^{s_j})^{R_j+D_j},
\ee
where $(1-B)^{R_0+D_0}$ is an integrating component, $R_j$, $j=0,1,\ldots, r$, are non-negative integer numbers, $1<s_1<\ldots<s_r$.  Below we describe a representations $d_j=R_j+D_j$, $j=0,1,\ldots, r$, of the increment orders $d_j$ by stating  conditions on the fractional parts $D_j$, such that the increment sequence $\vec y(m):=(1-B)^{R_0}\prod_{j=1}^r(1-B^{s_j})^{R_i}\vec{\xi}(m)$ is  a stationary  fractionally integrated seasonal stochastic  sequence.
For example, in case of single  increment pattern $(1-B^{s^*})^{R^*+D^*}$, this condition is $|D^*|<1/2$.

A  sequence $\chi_{\overline{s}}^{(R+D)}(\vec \xi(m))$ is called  \emph{a fractional multiple (FM) increment sequence}.

Consider a generating function of the Gegenbauer polinomial:
\[
(1-2 u B+B^2)^{-d}=\sum_{n=0}^{\infty}C_n^{(d)}(u)B^n,
\]
where
\[
C_n^{(d)}(u)=\sum_{k=0}^{[n/2]}\frac{(-1)^k(2u)^{n-2k}\Gamma(d-k+n)}{k!(n-2k)!\Gamma(d)}.
\]

The following lemma and  theorem hold true [31].

\begin{lema}\label{frac_incr_2}
Define the sets $\md M_j=\{\nu_{k_j}=2\pi k_j/s_j: k_j=0,1,\ldots, [s_j/2]\}$, $j=0,1,\ldots, r$, and the set $\md M=\bigcup_{j=0}^r \md M_j$. Then the increment operator $\chi_{\overline{s}}^{(D)}(B):=(1-B)^{D_0}\prod_{j=1}^r(1-B^{s_j})^{D_j}$ admits a representation
\begin{eqnarray*}
\chi_{\overline{s}}^{(D)}(B)
& = &\prod_{\nu \in \md M}(1-2\cos \nu B+B^2)^{\widetilde{D}_{\nu}}
\\
& = &(1-B)^{D_0+D_1+\ldots+D_r}(1+B)^{D_{\pi}}\prod_{\nu \in \md M\setminus\{0,\pi\}}(1-2\cos \nu B+B^2)^{D_{\nu}}
\\
& = &\ld(\sum_{m=0}^{\infty}G^+_{k^*}(m)B^m\rd)^{-1}=\sum_{m=0}^{\infty}G^-_{k^*}(m)B^m,
\end{eqnarray*}
where
\begin{eqnarray}
\label{Gegenbauer_GI+}
G^+_{k^*}(m)& = &\sum_{0\leq n_1,\ldots,n_{k^*}\leq m, n_1+\ldots+n_{k^*}=m}\prod_{\nu \in \md M}C_{n_{\nu}}^{(\widetilde{D}_{\nu})}(\cos\nu),
\\
\label{Gegenbauer_GI-}
G^-_{k^*}(m)& = &\sum_{0\leq n_1,\ldots,n_{k^*}\leq m, n_1+\ldots+n_{k^*}=m}\prod_{\nu \in \md M}C_{n_{\nu}}^{(-\widetilde{D}_{\nu})}(\cos\nu).
\end{eqnarray}
 $k^*=|\md M|$,  $D_{\nu}=\sum_{j=0}^rD_j \mr I \{\nu\in \md M_j\}$, $\widetilde{D}_{\nu}=D_{\nu}$ for $\nu \in \md M\setminus\{0,\pi\}$, $\widetilde{D}_{\nu}=D_{\nu}/2$ for $\nu=0$ and $\nu=\pi$.
\end{lema}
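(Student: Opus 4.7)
The plan is to derive the claimed factorization of $\chi_{\overline s}^{(D)}(B)$ by decomposing each factor $(1-B^{s_j})$ into linear factors indexed by the $s_j$-th roots of unity, pairing complex-conjugate roots into real quadratic factors of Gegenbauer type, and then expanding the resulting product via the Gegenbauer generating series.

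First I would use the identity $1 - z^{s_j} = \prod_{k=0}^{s_j-1}(1 - e^{2\pi i k/s_j}z)$ and group the factor for $k$ with the one for $s_j-k$ when $1 \le k < s_j/2$, producing the real quadratic $1 - 2\cos(2\pi k/s_j)B + B^2$. The unpaired roots are $z=1$ (always present, giving the linear factor $1-B$) and, when $s_j$ is even, $z=-1$ (giving $1+B$); these correspond to the boundary frequencies $\nu=0$ and $\nu=\pi$ in $\md M_j$. For the integrating component indexed by $j=0$, one simply has $s_0=1$ and $\md M_0=\{0\}$, so $(1-B)^{D_0}$ contributes only at $\nu=0$.

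Raising each factorization to the (possibly fractional) power $D_j$ and multiplying across $j=0,1,\ldots,r$, I would collect exponents by frequency using $D_\nu=\sum_{j}D_j\mr I\{\nu\in \md M_j\}$, which accounts correctly for overlaps between the $\md M_j$'s when the $s_j$'s share common divisors. This immediately yields the middle line of the identity, namely $(1-B)^{D_0+\cdots+D_r}(1+B)^{D_\pi}\prod_{\nu\in \md M\setminus\{0,\pi\}}(1-2\cos\nu B+B^2)^{D_\nu}$. Using the elementary factorizations $(1-B)^2 = 1-2B+B^2$ and $(1+B)^2 = 1+2B+B^2$, the two linear boundary factors can be rewritten as $(1-2\cos\nu B+B^2)^{\widetilde D_\nu}$ with $\widetilde D_\nu = D_\nu/2$ at $\nu \in \{0,\pi\}$, which absorbs them into the uniform product and gives the first line.

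Finally, for the two series representations I would apply the Gegenbauer generating function $(1-2uB+B^2)^{-d}=\sum_{n\ge 0}C_n^{(d)}(u)B^n$ in two ways: once at each $\nu$ with exponent $-\widetilde D_\nu$, yielding the inverse representation, and once with the substitution $d\mapsto -d$ to expand $(1-2uB+B^2)^{\widetilde D_\nu}$ directly. Taking the Cauchy product of the $k^*=|\md M|$ resulting power series and collecting the coefficient of $B^m$ yields precisely the convolution sums in the definitions of $G^+_{k^*}(m)$ and $G^-_{k^*}(m)$, whence the last two equalities of the lemma follow. The main bookkeeping subtlety throughout is the consistent treatment of overlaps between the sets $\md M_j$ and the isolation of the boundary frequencies $\{0,\pi\}$, where the quadratic factor degenerates to a perfect square and forces the factor of $1/2$ in the definition of $\widetilde D_\nu$.
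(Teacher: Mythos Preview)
Your argument is correct and is the natural route: factor $1-B^{s_j}$ over the $s_j$-th roots of unity, pair conjugate roots into real Gegenbauer quadratics, handle the degenerate boundary frequencies $\nu\in\{0,\pi\}$ via $(1\mp B)^2=1-2\cos\nu\,B+B^2$, collect exponents across $j$ by the indicator sum defining $D_\nu$, and then read off the series expansions from the Gegenbauer generating function and the Cauchy product. There is no substantive gap.

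As for comparison with the paper: note that the present paper does not actually prove this lemma. It is stated with the attribution ``The following lemma and theorem hold true [31]'' and no argument is given here; the proof lives in the cited earlier work of the same authors. Your sketch is precisely the standard derivation one would expect in that reference, so there is nothing to contrast methodologically. If you want to tighten the write-up, the only point worth making explicit is that the index set $\md M_j=\{2\pi k_j/s_j:0\le k_j\le[s_j/2]\}$ is exactly the set of frequencies in $[0,\pi]$ arising from the pairing of conjugate roots, so that the union $\md M$ automatically records all overlaps when different $s_j$ share common divisors; you already flag this, but stating it as an equality of sets would make the bookkeeping airtight.
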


\begin{thm}\label{thm_frac}
Assume that for a stochastic vector sequence $\vec \xi(m)$ and fractional differencing orders $d_j=R_j+D_j$, $j=0,1,\ldots, r$, the FM increment sequence $\chi_{\overline{1},\overline{s}}^{(R+D)}(\vec \xi(m))$ generated by increment operator (8)  is a stationary sequence with a bounded from zero and infinity spectral density $\widetilde{f}_{\overline{1}}(\lambda)$. Then for the non-negative integer numbers $R_j$, $j=0,1,\ldots, r$, the GM increment sequence $\chi_{\overline{1},\overline{s}}^{(R)}(\vec \xi(m))$    is stationary if $-1/2< D_{\nu}<1/2$ for all $\nu\in \md M$, where $D_{\nu}$ are defined by real numbers $D_j$, $j=0,1,\ldots, r$,
in Lemma 2.1, and it is long memory if $0< D_{\nu}<1/2$ for at least one $\nu\in \md M$, and invertible if $-1/2< D_{\nu}<0$. The spectral density $f(\lambda)$ of the stationary GM increment sequence $\chi_{\overline{1},\overline{s}}^{(R)}(\vec \xi(m))$ admits a representation
\[
 f(\lambda)=|\beta^{(R)}(i\lambda)|^2 \ld|\chi_{\overline{1}}^{(R)}(e^{-i\lambda})\rd|^{-2}\ld|\chi_{\overline{1}}^{(D)}(e^{-i\lambda})\rd|^{-2} \widetilde{f}_{\overline{1}}(\lambda)=:\ld|\chi_{\overline{1}}^{(D)}(e^{-i\lambda})\rd|^{-2} \widetilde{f} (\lambda),
  \]
  where
  \begin{eqnarray*}
  \ld|\chi_{\overline{1}}^{(D)}(e^{-i\lambda})\rd|^{-2}& = &\ld|\sum_{m=0}^{\infty}G^+_{k^*}(m) e^{-i\lambda m}\rd|^2=\ld|\sum_{m=0}^{\infty}G^-_{k^*}(m) e^{-i\lambda m}\rd|^{-2}
\\
& = &\prod_{\nu \in \md M}\ld|(e^{-i\nu}-e^{i\lambda})(e^{i\nu}-e^{i\lambda})\rd|^{-2\widetilde{D}_{\nu}},
  \end{eqnarray*}
 coefficients $G^+_{k^*}(m)$, $G^-_{k^*}(m)$ are defined in (9), (10).
\end{thm}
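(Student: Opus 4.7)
The plan is to factor the operator $\chi^{(R+D)}$ into integer and fractional parts, translate the fractional factor into Gegenbauer form via Lemma 2.1, and then read off the spectral density and its regularity from standard linear-filter relations.

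First I would use the commutativity of the difference operators to write
\[
\chi_{\overline{1},\overline{s}}^{(R+D)}(B)=\chi_{\overline{1},\overline{s}}^{(R)}(B)\,\chi_{\overline{s}}^{(D)}(B),
\]
and apply Lemma 2.1 to the fractional factor. The elementary identity $1-2\cos\nu\,e^{-i\lambda}+e^{-2i\lambda}=(e^{-i\lambda}-e^{i\nu})(e^{-i\lambda}-e^{-i\nu})$ converts the Gegenbauer product into the explicit form
\[
|\chi_{\overline{1}}^{(D)}(e^{-i\lambda})|^{-2}=\prod_{\nu\in\md M}|(e^{-i\nu}-e^{i\lambda})(e^{i\nu}-e^{i\lambda})|^{-2\widetilde D_\nu},
\]
while the expansions (9)--(10) of Lemma 2.1 yield the alternative series representations of $|\chi_{\overline{1}}^{(D)}(e^{-i\lambda})|^{-2}$ in terms of $G^+_{k^*}(m)$ and $G^-_{k^*}(m)$ claimed in the theorem.

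Second, I would pass from operators to spectral densities. Since $\chi^{(D)}(B)$ acts as a linear filter with transfer function $\chi_{\overline{1}}^{(D)}(e^{-i\lambda})$, the covariance density of the stationary GM increment $\chi_{\overline{1},\overline{s}}^{(R)}(\vec\xi(m))$ equals $|\chi_{\overline{1}}^{(D)}(e^{-i\lambda})|^{-2}\widetilde f_{\overline{1}}(\lambda)$ whenever this function is locally integrable. Applying Theorem 2.1 with $d=R$ connects this stationary density to the underlying spectral measure $F$ through the factor $|\chi_{\overline{1}}^{(R)}(e^{-i\lambda})|^2/|\beta^{(R)}(i\lambda)|^2$; solving for $f=dF/d\lambda$ produces exactly the displayed formula. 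Third, I would analyse the local behaviour of $|\chi_{\overline{1}}^{(D)}(e^{-i\lambda})|^{-2}$ at each singular frequency. For $\nu\in\md M\setminus\{0,\pi\}$, the Gegenbauer factor $(1-2\cos\nu B+B^2)^{D_\nu}$ contributes simple zeros at $\lambda=\pm\nu$, so locally $|\chi_{\overline{1}}^{(D)}(e^{-i\lambda})|^{-2}\asymp|\lambda\mp\nu|^{-2D_\nu}$; for $\nu\in\{0,\pi\}$ the two zeros at $\pm\nu$ coincide, but the halving convention $\widetilde D_\nu=D_\nu/2$ exactly compensates, giving the same local exponent $-2D_\nu$. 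Since $\widetilde f_{\overline{1}}$ is bounded away from $0$ and $\infty$, Lebesgue integrability around each singular frequency is equivalent to $D_\nu<1/2$; together with the lower bound $D_\nu>-1/2$ needed for the coefficients $G^+_{k^*}(m)$ in (9) to be square-summable, this yields the stationarity range $-1/2<D_\nu<1/2$. The long-memory assertion then follows by the Tauberian correspondence between a spectral pole ($D_\nu>0$) at some $\lambda=\nu$ and non-summable autocovariances of $\chi_{\overline{1},\overline{s}}^{(R)}(\vec\xi(m))$; invertibility in the subrange $-1/2<D_\nu<0$ follows from the absolute convergence of the AR expansion encoded in $G^+_{k^*}(m)$.

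The main obstacle is the careful bookkeeping between two a priori distinct spectral objects — the covariance density of the stationary GM increment sequence viewed as an ordinary stationary process on one hand, and the density $f=dF/d\lambda$ of the spectral measure $F$ appearing in Theorem 2.1 on the other — together with the halved-exponent convention at the endpoint frequencies $\nu=0$ and $\nu=\pi$. Once these are reconciled, the remaining work reduces to routine singular-integral estimates near each $\lambda=\pm\nu$ and straightforward algebraic manipulations of the Gegenbauer generating-function expansions from Lemma 2.1.
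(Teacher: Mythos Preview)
The paper does not actually prove this theorem: Lemma 2.1 and Theorem 2.2 are introduced together with the sentence ``The following lemma and theorem hold true [31]'' and the argument is deferred entirely to that reference. There is no in-paper proof to compare against.

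Your outline is a sound reconstruction of the standard argument and matches what one would expect in [31]: factor $\chi^{(R+D)}=\chi^{(R)}\chi^{(D)}$, rewrite $\chi^{(D)}$ in Gegenbauer form via Lemma 2.1, apply the linear-filter spectral relation together with Theorem 2.1, and analyse the local singularity $|\lambda-\nu|^{-2D_\nu}$ at each $\nu\in\md M$. One minor slip: you attribute the lower bound $D_\nu>-1/2$ to square-summability of $G^+_{k^*}(m)$, but by Parseval $\sum_m|G^+_{k^*}(m)|^2<\infty$ is equivalent to integrability of $|\chi_{\overline 1}^{(D)}(e^{-i\lambda})|^{-2}$, which is the \emph{upper} bound $D_\nu<1/2$. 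The lower bound $D_\nu>-1/2$ corresponds instead to square-summability of $G^-_{k^*}(m)$ (integrability of $|\chi_{\overline 1}^{(D)}(e^{-i\lambda})|^{2}$), i.e.\ to the invertibility clause; strictly speaking stationarity of the GM increment alone only needs $D_\nu<1/2$, and the theorem is simply packaging the two dual bounds together.
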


The spectral density $f(\lambda)$ and the structural function $D^{(R)}_{ \overline{s}}(m,\overline{1},\overline{1})$ of a stationary GM increment sequence $\chi_{\overline{1},\overline{s}}^{(R)}(\vec \xi(m))$ exhibit the following behavior for the constant matrices $C$ and $K$:
\begin{itemize}
\item   $|\beta^{(R)}(i\lambda)|^{-2} |\chi_{\overline{1}}^{(R)}(e^{-i\lambda})|^2f(\lambda)\sim C|\nu-\lambda|^{-2\widetilde{D}_{\nu}}$   when $\lambda\to \nu$, $\nu\in \mathcal{M}$ (for properties of eigenvalues of generalized fractional process, we refer to Palma and Bondon [39])

\item $D^{(R)}_{ \overline{s}}(m,\overline{1},\overline{1})\sim K\sum_{\nu\in \mathcal{M}:\widetilde{D}_{\nu}>0}|m|^{2\widetilde{D}_{\nu}-1}\cos (m\nu)$,   as $m\to\infty$, see Giraitis and Leipus [12].
\end{itemize}

\begin{pry}
1. For the increment operator $(1-B)^{R_0+D_0}(1-B^2)^{R_1+D_1}$, $\md M_0=\{0\}$, $\md M_1=\{0,\pi\}$, $\md M=\{0,\pi\}$, the Gegenbauer representation  is $(1-B)^{D_0+D_1}(1+B)^{D_1}$.
The stationarity conditions are: $|D|=|D_0+D_1|<1/2$, $|D_{\pi}|=|D_1|<1/2$.

2. For the increment operator  $(1-B^2)^{R_1+D_1}(1-B^3)^{R_2+D_2}$,  $\md M_0=\{0,\pi\}$, $\md M_1=\{0,2\pi/3\}$, $\md M=\{0,2\pi/3,\pi\}$,the Gegenbauer representation   is $(1-B)^{D_1+D_2}(1-2\cos(2\pi/3)B+B^2)^{D_2}(1+B)^{D_1}$.
The stationarity conditions are:$|D|=|D_1+D_2|<1/2$, $|D_{2\pi/3}|=|D_2|<1/2$, $|D_{\pi}|=|D_1|<1/2$.

3. For the increment operator  $(1-B^2)^{R_1+D_1}(1-B^4)^{R_2+D_2}$, $\md M_0=\{0,\pi\}$, $\md M_1=\{0,\pi/2,\pi\}$, $\md M=\{0,\pi/2,\pi\}$, the Gegenbauer representation   is $(1-B)^{D_1+D_2}(1+B^2)^{D_2}(1+B)^{D_1+D_2}$.
The stationarity conditions are: $|D|=|D_{\pi}|=|D_1+D_2|<1/2$, $|D_{\pi/2}|=|D_2|<1/2$.
\end{pry}

\section{Hilbert space projection method of filtering}\label{classical_filtering}

\subsection{Filtering of vector-valued stationary GM increment} \label{classical_filtering_vector}

Consider a vector-valued stochastic sequence with stationary GM increments $\vec{\xi}(m)$ constructed from transformation (4) and a vector-valued stationary stochastic sequence $\vec\eta(m)$ uncorrelated with the sequence $\vec\eta(m)$.
Let the stationary GM increment sequence $\chi_{\overline{\mu},\overline{s}}^{(d)}(\vec{\xi}(m))=\{\chi_{\overline{\mu},\overline{s}}^{(d)}(\xi_p(m))\}_{p=1}^{T}$
and the stationary stochastic sequence $\vec\eta(m)$ have absolutely continuous spectral function $F(\lambda)$ and $G(\lambda)$ with the spectral densities $f(\lambda)=\{f_{ij}(\lambda)\}_{i,j=1}^{T}$ and $g(\lambda)=\{g_{ij}(\lambda)\}_{i,j=1}^{T}$ respectively.

Without loss of generality we will assume that $ \mathsf{E}\chi_{\overline{\mu},\overline{s}}^{(d)}(\vec{\xi}(m))=0$, $\mathsf{E}\vec\eta(m)=0$ and $\overline{\mu}>\overline{0}$.

\textbf{Filtering problem.} Consider the problem of mean square optimal linear estimation of the functional
\begin{equation}
A\vec{\xi}=\sum_{k=0}^{\infty}(\vec{a}(k))^{\top}\vec{\xi}(-k),
\end{equation}
which depends on unobserved values of a stochastic sequence $\vec{\xi}(k)=\{\xi_{p}(k)\}_{p=1}^{T}$ with stationary GM
increments. Estimates are based on observations of the sequence $\vec\zeta(k)=\vec\xi(k)+\vec\eta(k)$ at points $k=0,-1,-2,\ldots$.

We will suppose that the following conditions are satisfied:
\begin{itemize}
\item  conditions on coefficients $\vec{a}(k)=\{a_{p}(k)\}_{p=1}^{T}$, $k\geq0$
 \be\label{umova
na a_f_st.n_d}
\sum_{k=0}^{\infty}\|\vec{a}(k)\|<\infty,\quad
\sum_{k=0}^{\infty}(k+1)\|\vec{a}(k)\|^{2}<\infty,\ee

\item  \emph{the minimality condition} on the spectral densities $f(\lambda)$ and $g(\lambda)$
\be
 \ip \text{Tr}\left[ \frac{|\beta^{(d)}(i\lambda)|^2}{|\chi_{\overline{\mu}}^{(d)}(e^{-i\lambda})|^2}\ld(f(\lambda)+|\beta^{(d)}(i\lambda)|^2 g(\lambda)\rd)^{-1}\right]
 d\lambda<\infty.
\label{umova11_f_st.n_d}
\ee
\end{itemize}
The latter condition  is the necessary and sufficient one under which the mean square errors of estimates of functional  $A\vec\xi$ is not equal to $0$.

We apply the  Hilbert space estimation technique proposed by Kolmogorov [26] which can be described as a $3$-stage procedure:
(i) define a target element (to be estimated) of the space $H=L_2(\Omega, \mathcal{F},\mt P)$ of random variables $\gamma$ which have zero mean values and finite variances, $\mt E\gamma=0$, $\mt E|\gamma|^2<\infty$, endowed with the inner product $\langle \gamma_1,\gamma_2\rangle={\mt E}{\gamma_1\overline{\gamma_2}}$,
(ii) define a subspace of $H$ generated by observations,
(iii) find an estimate of the target element as an orthogonal projection on the defined subspace.

\emph{Stage (i)}.
Represent the functional $A\vec \xi$ as
$A\vec \xi=A\vec \zeta-A\vec \eta$, where
\[
 A\vec\zeta=\sum_{k=0}^{\infty}(\vec a(k))^{\top}\vec \zeta(-k),\quad A\vec\eta=\sum_{k=0}^{\infty}(\vec a(k))^{\top}\vec \eta(-k).\]
Note, that under conditions (12), the functional $A\vec\eta$ belongs to the space $H$.
To find the estimate of the functional $A\vec\xi$, it is sufficient to find the estimate of the functional $A\vec\eta$:
\be\label{mainformula_f_st.n_d}
 \widehat{A}\vec\xi=A\vec\zeta-\widehat{A}\vec\eta.\ee
 Since the functional $A\vec\zeta$ depends on values of the stochastic sequence $\zeta(k)$ which is observed,  the mean square errors of the optimal estimates $\widehat{A}\vec\xi$ and $\widehat{A}\vec\eta$ are connected by relation
 \begin{eqnarray*}
 \Delta\ld(f,g;\widehat{A}\vec\xi\rd)&=&\mt E \ld|A\vec\xi-\widehat{A}\vec\xi\rd|^2= \mt
 E\ld|A\vec\zeta-A\vec\eta-A\vec\zeta+\widehat{A}\vec\eta\rd|^2
 \\&=&\mt E\ld|A\vec\eta-\widehat{A}\vec\eta\rd|^2=\Delta\ld(f,g;\widehat{A}\vec\eta\rd).\end{eqnarray*}
Relation (14) implies that any linear estimate $\widehat{A}\vec\xi$ of the functional $A\vec\xi$
allows the representation
\be
 \label{otsinka A_f_st.n_d}
 \widehat{A}\vec\xi=\sum_{k=0}^{\infty}(\vec a(k))^{\top}(\vec\xi(-k)+\vec\eta(-k))-\ip
 (\vec h_{\overline{\mu}}(\lambda))^{\top}dZ_{\xi^{(n)}+\eta^{(n)}}(\lambda), \ee
where
$ \vec h_{\overline{\mu}}(\lambda)=\{ h_p(\lambda)\}_{p=1}^T$ is the spectral characteristic of the estimate
$\widehat{A}\vec\xi$.

\emph{Stage (ii).} The set of the observed GM increments $\{\chi_{\overline{\mu},\overline{s}}^{(d)}(\vec{\xi}(k))+\chi_{\overline{\mu},\overline{s}}^{(d)}(\vec{\eta}(k)):k\leq0\}$, $\overline{\mu}>\vec 0$ generates   a closed linear subspace $H^{0}(\xi^{(d)}+\eta^{(d)})$ of the Hilbert space $H=L_2(\Omega, \mathfrak{F},
\mt P)$.
   The functions
\[
 e^{i\lambda k}\chi_{\overline{\mu}}^{(d)}(e^{-i\lambda})\frac{1}{\beta^{(d)}(i\lambda)}\vec\delta_l,\quad \vec\delta_l=\{\delta_{lp}\}_{p=1}^T,\,l=1,\ldots,T,\, k\leq0,\]
generate a closed linear subspace $L_2^{0}(p)$ of the Hilbert space
$L_2(p)$.
Then the relation
\[
 \chi_{\overline{\mu},\overline{s}}^{(d)}(\vec{\xi}(k))+\chi_{\overline{\mu},\overline{s}}^{(d)}(\vec{\eta}(k))=\ip e^{i\lambda
 k}\chi_{\overline{\mu}}^{(d)}(e^{-i\lambda})\dfrac{1}{\beta^{(d)}(i\lambda)}dZ_{\xi^{(d)}+\eta^{(d)}}(\lambda)\]
implies a map between elements
\[
e^{i\lambda k}\chi_{\overline{\mu}}^{(d)}(e^{-i\lambda})\dfrac{1}{\beta^{(d)}(i\lambda)}\vec \delta_l\]
from the space $L_2^{0}(p)$
and elements $\chi_{\overline{\mu},\overline{s}}^{(d)}(\vec{\xi}(k))+\chi_{\overline{\mu},\overline{s}}^{(d)}(\vec{\eta}(k))$ from the space
$H^{0}(\xi^{(d)}+\eta^{(d)})$ respectively.

\emph{Stage (iii).} The estimate
$\widehat{A}\vec\eta$ is found as an orthogonal projection of the functional $A\vec\eta$ from the space $H=L_2(\Omega, \mathfrak{F},
\mt P)$ on the subspace $H^{0}(\xi^{(d)}+\eta^{(d)})$. This projection  is characterized by the following conditions:

1) $ \widehat{A}\vec\eta\in H^{0}(\xi^{(d)}+\eta^{(d)}) $;

2) $(A\vec\eta-\widehat{A}\vec\eta)\perp
H^{0}(\xi^{(d)}+\eta^{(d)})$.

Define the following matrix-valued Fourier coefficients:
\[
 S^{\overline{\mu}}_{k,j}=\frac{1}{2\pi}\ip e^{-i\lambda(k+j)}
 \frac{|\beta^{(d)}(i\lambda)|^2}{|\chi_{\overline{\mu}}^{(d)}(e^{-i\lambda})|^2}g(\lambda)p^{-1}(\lambda)d\lambda,
 \quad k\geq0,j\geq-n(\gamma),\]
\[
 P_{k,j}^{\overline{\mu}}=\frac{1}{2\pi}\ip e^{i\lambda (j-k)}
 \dfrac{|\beta^{(d)}(i\lambda)|^2}{|\chi_{\overline{\mu}}^{(d)}(e^{-i\lambda})|^2}p^{-1}(\lambda)
 d\lambda,\quad k,j\geq0,\]
\[
 Q_{k,j}=\frac{1}{2\pi}\ip
 e^{i\lambda(j-k)} f(\lambda)g(\lambda)p^{-1}(\lambda)d\lambda,\quad k,j\geq0.\]
Define the vectors $$\me a=((\vec a(0))^{\top},(\vec a(1))^{\top},(\vec a(2))^{\top},\ldots)^{\top},\quad  \me a_{\overline{\mu}}=((\vec a_{\overline{\mu}}(0))^{\top},
(\vec a_{\overline{\mu}}(1))^{\top},(\vec a_{\overline{\mu}}(2))^{\top},\ldots)^{\top},$$ where the  coefficients $ a_{\overline{\mu}}(k)=a_{-\mu}(k-n(\gamma))$, $k\geq0$,
\be\label{coeff a_mu_f_st.n_d}
 \vec a_{-\overline{\mu}}(m)=\sum_{l=\max\ld\{m,0\rd\}}^{m+n(\gamma)}e_{\gamma}(l-m)\vec a(l),\quad m\geq-n(\gamma).\ee
Denote by $\me P_{\overline{\mu}}$,  $\me S_{\overline{\mu}}$ and $\me Q$  matrices with the matrix entries $(\me P_{\overline{\mu}})_{l,k}=P_{l,k}^{\overline{\mu}}$,  $(\me S_{\overline{\mu}})_{l, k} =S^{\overline{\mu}}_{l+1,k-n(\gamma)}$, $(\me Q)_{l,k}=Q_{l,k}$,  $l,k\geq0$.

\begin{thm}\label{thm_est_A}
A solution $\widehat{A}\vec\xi$ to the filtering problem for a vector-valued stochastic sequence $\vec{\xi}(m)$ with
stationary  GM increments under conditions  (12) and (13) is  calculated by formula (15).
The spectral characteristic $\vec h_{\overline{\mu}}(\lambda)$ and the value of the mean square error $\Delta(f,g;\widehat{A}\vec\xi)$ are calculated by the formulas
\be\label{spectr A_f_st.n_d}
 (\vec h_{\overline{\mu}}(\lambda))^{\top}=\ld[\chi_{\overline{\mu}}^{(d)}(e^{i\lambda})(A(e^{-i\lambda }))^{\top}
  g(\lambda)-(C_{\overline{\mu}}(e^{i\lambda}))^{\top}\rd]p^{-1}(\lambda)\frac{\overline{\beta^{(d)}(i\lambda)}
}{\chi_{\overline{\mu}}^{(d)}(e^{i\lambda})},\ee
\[A(e^{-i\lambda })=\sum_{k=0}^{\infty}\vec a(k)e^{-i\lambda k},
\quad
 C_{\overline{\mu}}(e^{i\lambda})=\sum_{k=0}^{\infty} \ld(\me
 P_{\overline{\mu}}^{-1}\me S_{\overline{\mu}}\me a_{\overline{\mu}}\rd)_ke^{i\lambda
 (k+1)},\]
 and
\begin{eqnarray}
 \notag && \Delta\ld(f,g;\widehat{A}\vec\xi\rd)=\Delta\ld(f,g;\widehat{A}\vec\eta\rd)= \mt E\ld|A\vec\eta-\widehat{A}\vec\eta\rd|^2=
 \\
 \notag&=&\frac{1}{2\pi}\ip
 \frac{1}{|\chi_{\overline{\mu}}^{(d)}(e^{-i\lambda})|^2}
\ld(\chi_{\overline{\mu}}^{(d)}(e^{i\lambda})(A(e^{-i\lambda }))^{\top} f(\lambda)+|\beta^{(d)}(i\lambda)|^2(C_{\overline{\mu}}(e^{i\lambda}))^{\top}\rd)
\\
\notag &\quad & \times p^{-1}(\lambda) g(\lambda)p^{-1}(\lambda)\ld(\chi_{\overline{\mu}}^{(d)}(e^{-i\lambda})f(\lambda) A(e^{i\lambda })  +|\beta^{(d)}(i\lambda)|^2 C_{\overline{\mu}}(e^{-i\lambda}) \rd) d\lambda
 \\
 \notag&&+\frac{1}{2\pi}\ip\frac{|\beta^{(d)}(i\lambda)|^2}{|\chi_{\overline{\mu}}^{(d)}(e^{-i\lambda})|^2}
 \ld(\chi_{\overline{\mu}}^{(d)}(e^{i\lambda})(A(e^{-i\lambda }))^{\top}g(\lambda)-(C_{\overline{\mu}}(e^{i\lambda}))^{\top}\rd)
p^{-1
}(\lambda) f(\lambda)p^{-1
}(\lambda)
\\
\notag &\quad&\times \ld(\chi_{\overline{\mu}}^{(d)}(e^{-i\lambda})g(\lambda)A(e^{i\lambda })-C_{\overline{\mu}}(e^{-i\lambda})\rd)d\lambda
 \\&=&\ld\langle \me S_{\overline{\mu}}
 \me a_{\overline{\mu}},\me P_{\overline{\mu}}^{-1}\me S_{\overline{\mu}}\me a_{\overline{\mu}}\rd\rangle+\ld\langle\me Q\me a,\me
 a\rd\rangle.\label{poh A_f_st.n_d}\end{eqnarray}
\end{thm}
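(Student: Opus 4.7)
The plan is to execute the three stages outlined immediately before the statement. By the decomposition (14), finding $\widehat{A}\vec\xi$ reduces to projecting $A\vec\eta$ orthogonally onto the subspace $H^{0}(\xi^{(d)}+\eta^{(d)})$. First I would realise $A\vec\eta$ as a spectral stochastic integral against $dZ_{\xi^{(d)}+\eta^{(d)}}$: since $\vec\eta$ is stationary with spectral measure $dZ_\eta$, and since the spectral representation of the GM increment of $\vec\zeta=\vec\xi+\vec\eta$ given in Section~\ref{spectral_ theory} together with the independence of $\vec\xi$ and $\vec\eta$ allow one to express the projection of $\vec\eta(-k)$ onto $L_2^{0}(p)$ through the multiplier $|\beta^{(d)}(i\lambda)|^{2}g(\lambda)p^{-1}(\lambda)(\chi_{\overline\mu}^{(d)}(e^{-i\lambda}))^{-1}\overline{\beta^{(d)}(i\lambda)}$, any candidate estimate automatically takes the form (15) for some spectral function $\vec h_{\overline\mu}$ in the isometric image $L_2^{0}(p)$ of $H^{0}(\xi^{(d)}+\eta^{(d)})$.

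Next I would impose the orthogonality condition $(A\vec\eta-\widehat{A}\vec\eta)\perp H^{0}(\xi^{(d)}+\eta^{(d)})$. Testing against the generators $e^{i\lambda k}\chi_{\overline\mu}^{(d)}(e^{-i\lambda})(\beta^{(d)}(i\lambda))^{-1}\vec\delta_{l}$ for $l=1,\ldots,T$ and $k\leq 0$, and using the isometry between $H^{0}$ and $L_2^{0}(p)$, turns the condition into a linear system on the nonnegative Fourier indices. After multiplication by $\chi_{\overline\mu}^{(d)}(e^{i\lambda})$, the contribution of $A(e^{-i\lambda})$ produces precisely the shifted coefficients $\vec a_{-\overline\mu}(m)$ of (16), the analytic projection kernel is encoded by the block Toeplitz matrix $\me P_{\overline\mu}$, and the inhomogeneity by $\me S_{\overline\mu}$. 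Solving $\me P_{\overline\mu}\me c=\me S_{\overline\mu}\me a_{\overline\mu}$ gives the vector $\me c=\me P_{\overline\mu}^{-1}\me S_{\overline\mu}\me a_{\overline\mu}$; assembling its entries into $C_{\overline\mu}(e^{i\lambda})=\sum_{k\geq 0}\me c_{k}e^{i\lambda(k+1)}$ and substituting back yields formula (17) for $\vec h_{\overline\mu}(\lambda)$.

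Finally, the mean-square error equals $\mt E|A\vec\eta-\widehat{A}\vec\eta|^{2}$. Substituting (17) and expanding $p=f+|\beta^{(d)}|^{2}g$ into its two pieces reproduces the two displayed integrals of (18); applying Parseval with respect to the kernels $P^{\overline\mu}$, $S^{\overline\mu}$, $Q$ then collapses them to the compact expression $\langle\me S_{\overline\mu}\me a_{\overline\mu},\me P_{\overline\mu}^{-1}\me S_{\overline\mu}\me a_{\overline\mu}\rangle+\langle\me Q\me a,\me a\rangle$. The main obstacle is the well-posedness of this inversion: one needs $\me P_{\overline\mu}$ to act boundedly and invertibly on the appropriate $\ell_{2}$-space of vector-valued sequences, and this is precisely what the minimality condition (13) secures, because it forces the symbol $|\beta^{(d)}|^{2}|\chi_{\overline\mu}^{(d)}|^{-2}p^{-1}$ to be integrable and bounded away from degeneracy, so that $\me P_{\overline\mu}$ is a positive invertible Toeplitz-type operator. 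Secondary technicalities are the summability of the Fourier series defining $C_{\overline\mu}$, which follows from the coefficient bounds in (12), and a careful bookkeeping of the index shift by $n(\gamma)$ when passing from $\vec a$ to $\vec a_{\overline\mu}$ via (16).
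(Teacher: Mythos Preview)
Your proposal is correct and follows the same Hilbert-space projection strategy as the paper, but you have the roles of the two projection conditions swapped. In the paper's argument, the orthogonality condition $(A\vec\eta-\widehat{A}\vec\eta)\perp H^{0}(\xi^{(d)}+\eta^{(d)})$ is applied first: testing against the generators indexed by $k\le 0$ shows that the function
\[
\Big[(A(e^{-i\lambda}))^{\top}g(\lambda)\overline{\beta^{(d)}(i\lambda)}-(\vec h_{\overline\mu}(\lambda))^{\top}p(\lambda)\Big]\frac{\chi_{\overline\mu}^{(d)}(e^{i\lambda})}{\overline{\beta^{(d)}(i\lambda)}}
\]
has vanishing Fourier coefficients at all indices $k\le 0$, which is what produces the \emph{form} (17) with an as-yet-unknown one-sided series $C_{\overline\mu}(e^{i\lambda})=\sum_{k\ge 0}\vec c_{\overline\mu}(k)e^{i\lambda(k+1)}$. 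It is then the \emph{membership} condition $\widehat{A}\vec\eta\in H^{0}(\xi^{(d)}+\eta^{(d)})$, i.e.\ $\vec h_{\overline\mu}(\lambda)=h(\lambda)\chi_{\overline\mu}^{(d)}(e^{-i\lambda})(\beta^{(d)}(i\lambda))^{-1}$ with $h(\lambda)=\sum_{k\ge 0}\vec s(k)e^{-i\lambda k}$, that forces the Fourier coefficients of $(\vec h_{\overline\mu})^{\top}\beta^{(d)}(i\lambda)/\chi_{\overline\mu}^{(d)}(e^{-i\lambda})$ to vanish for $l\ge 1$; expanding this via (17) gives the linear system $\me P_{\overline\mu}\me c_{\overline\mu}=\me S_{\overline\mu}\me a_{\overline\mu}$. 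You attribute the linear system to orthogonality, but orthogonality constrains only the non-positive Fourier side; the nonnegative-index equations you describe come from membership. Either order of applying the two conditions can be made to work, so this is a bookkeeping point rather than a gap, but keep straight which condition pins down which half of the spectrum.
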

\begin{proof}
See Appendix.
\end{proof}

 The filtering problem  for the functional  $A_N\vec\xi$ is solved directly by Theorem 3.1  by putting $\vec a(k)=\vec 0$ for
$k>N$. To solve the filtering problem for the $p$th coordinate of the single vector $\vec\xi (-N)$, we put $\vec a(N)=\vec\delta_p$, $\vec a(k)=\vec 0$ for $k\neq N$.

Define  a matrix $\me S_{\overline{\mu},N}$ with the $T\times T$ matrix  entries
 $(\me S_{\overline{\mu},N})_{l, m} =S^{\overline{\mu}}_{l+1,m-n(\gamma)}$ for $l\geq0$, $0\leq m\leq N+n(\gamma)$ and $(\me S_{\overline{\mu},N})_{l, m} =0$ for $l\geq0$, $m>N+n(\gamma)$.
Define another  matrix $\me Q_{N}$  with the $T\times T$ matrix entries $(\me Q_{N})_{l,k}=Q_{l,k}$, $0\leq l,k\leq N$, and $(\me Q_{N})_{l,k}=0$ otherwise.

The following corollaries take place.

\begin{nas}\label{nas_A_N_f_st.n_d}
A solution $\widehat{A}\vec\xi_N$ to the filtering problem for for a linear functional $A_N\xi$ of the values of a vector-valued stochastic sequence $\vec{\xi}(m)$  with
stationary  GM increments under condition  (12)  is  calculated by the formula
\be \label{otsinka A_N_f_st.n_d}
 \widehat{A}_N\vec\xi=\sum_{k=0}^N(\vec a(k))^{\top}(\vec\xi(-k)+\vec\eta(-k))-\ip
 (h_{\overline{\mu},N}(\lambda))^{\top}dZ_{\xi^{(n)}+\eta^{(n)}}(\lambda).
\ee
The spectral characteristic
$\vec h_{\overline{\mu},N}(\lambda)$ of the optimal estimate $\widehat{A}_N\vec\xi$ is calculated by the formula
\be \label{spectr A_N_f_st.n_d}
 (\vec h_{\overline{\mu}}(\lambda))^{\top}=\ld[\chi_{\overline{\mu}}^{(d)}(e^{i\lambda})(A_N(e^{-i\lambda }))^{\top}
  g(\lambda)-(C^{\overline{\mu}}_N(e^{i\lambda}))^{\top}\rd]p^{-1}(\lambda)\frac{\overline{\beta^{(d)}(i\lambda)}
}{\chi_{\overline{\mu}}^{(d)}(e^{i\lambda})},\ee
where
\[
 A_N(e^{i\lambda })=\sum_{k=0}^{N}\vec a(k)e^{-i\lambda k},\quad
 C_N^{\overline{\mu}}(e^{i\lambda})=\sum_{k=0}^{\infty}
 \ld(\me P_{\overline{\mu}}^{-1}\me S_{\overline{\mu},N}\me a_{\overline{\mu},N}\rd)_k e^{i\lambda(k+1)},\]
\[
     \vec {\me a}_{\overline{\mu},N}=(\vec  a_{\overline{\mu},N}(0),\vec  a_{\overline{\mu},N}(1),\ldots, \vec  a_{\overline{\mu},N}(N+n(\gamma)),0,\ldots )^{\top},\]
\[
   \vec   a_{\overline{\mu},N}(k)=\vec a_{-\overline{\mu},N}(k- n(\gamma)),\quad 0\leq k\leq N+n(\gamma),\]
\be\label{coeff a_N_mu_f_st.n_d}
 \vec a_{-\overline{\mu},N}(m)=\sum_{l=\max\ld\{m,0\rd\}}^{\min\{m+n(\gamma),N\}}e_{\gamma}(l-m)\vec a(l),\quad  -n(\gamma)\leq m\leq N.\ee
The  the value of the mean square error $\Delta(f,g;\widehat{A}_N\vec\xi)$ is calculated by the formula
\begin{eqnarray}
 \notag \Delta\ld(f,g;\widehat{A}_N\vec\xi\rd)&=&\Delta\ld(f,g;\widehat{A}\vec\eta_N\rd)= \mt E\ld|A\vec\eta_N-\widehat{A}\vec\eta_N\rd|^2
  \\&=&\ld\langle \me S_{\overline{\mu},N} \me
 a_{\overline{\mu},N},\me P_{\overline{\mu}}^{-1}\me S_{\overline{\mu},N}\me a_{\overline{\mu},N}\rd\rangle+\ld\langle\me Q_{N}\me a_N,\me
 a_{N}\rd\rangle,\label{poh A_N_f_st.n_d}\end{eqnarray}
where $\me a_N=((\vec a(0))^{\top},(\vec a(1))^{\top},(\vec a(2))^{\top},\ldots,(\vec a(N))^{\top},0,\ldots)^{\top}$.
\end{nas}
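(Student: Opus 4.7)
The plan is to view the corollary as the specialization of Theorem~3.1 to a coefficient sequence that terminates at $N$, i.e.\ a sequence $\{\vec a(k)\}_{k\ge 0}$ with $\vec a(k)=\vec 0$ for every $k>N$. Under this identification, $A\vec\xi$ of (11) coincides with $A_N\vec\xi$, condition (12) is automatic because the defining sum is finite, and the three-stage Hilbert-space construction described before Theorem~3.1 applies verbatim; the representation (15) then specializes to (19) by truncating the outer sum. What remains is to check that each of the objects entering (17)--(18) takes the form stated in (20) and~(22).

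For the transfer functions in the spectral characteristic, the series $A(e^{-i\lambda})=\sum_{k\ge 0}\vec a(k)e^{-i\lambda k}$ becomes the trigonometric polynomial $A_N(e^{-i\lambda})$ of (20). For the auxiliary series $C_{\overline{\mu}}(e^{i\lambda})$, I would propagate the truncation through the coefficients of (16): the range $\max\{m,0\}\le l\le m+n(\gamma)$ collapses to $\max\{m,0\}\le l\le \min\{m+n(\gamma),N\}$ because $\vec a(l)=\vec 0$ for $l>N$, which is precisely formula (21). This also forces $\vec a_{-\overline{\mu},N}(m)=\vec 0$ for $m>N$, so that the block vector $\me a_{\overline{\mu}}$ of Theorem~3.1 reduces to the finitely supported $\me a_{\overline{\mu},N}$ defined in the corollary, with last potentially nonzero block at index $N+n(\gamma)$. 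Consequently, in the product $\me S_{\overline{\mu}}\me a_{\overline{\mu}}$ only the first $N+n(\gamma)+1$ block columns of $\me S_{\overline{\mu}}$ are active, and replacing the matrix by the truncated $\me S_{\overline{\mu},N}$ (which zeros out the remaining columns) leaves the product unchanged. Substituting these reductions into (17) yields (20).

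For the mean-square error, the quadratic form $\langle\me Q\me a,\me a\rangle$ in (18) is the double block sum $\sum_{l,k\ge 0}(\vec a(l))^{\top}Q_{l,k}\vec a(k)$, which collapses to $\sum_{0\le l,k\le N}(\vec a(l))^{\top}Q_{l,k}\vec a(k)=\langle\me Q_N\me a_N,\me a_N\rangle$ because $\vec a$ is supported on $\{0,\ldots,N\}$. The first term transforms by the same argument into $\langle\me S_{\overline{\mu},N}\me a_{\overline{\mu},N},\me P_{\overline{\mu}}^{-1}\me S_{\overline{\mu},N}\me a_{\overline{\mu},N}\rangle$. Adding the two pieces recovers (22).

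The only genuine obstacle is an indexing check: one must confirm that the upper limit $\min\{m+n(\gamma),N\}$ in (21) is exactly what (16) produces when applied to the truncated $\vec a$, and that the zero-padding of $\me S_{\overline{\mu},N}$ matches the notation of the corollary. Once this bookkeeping is carried out, no additional projection argument is required, and the minimality condition (13) is not needed here because the functional depends on only finitely many values of~$\vec\xi$.
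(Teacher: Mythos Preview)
Your approach is exactly the paper's: immediately before the corollary it states that ``the filtering problem for the functional $A_N\vec\xi$ is solved directly by Theorem~3.1 by putting $\vec a(k)=\vec 0$ for $k>N$'', and your bookkeeping on how the truncation propagates through $\me a_{\overline{\mu}}$, $\me S_{\overline{\mu}}$, and $\me Q$ is precisely the unwritten content of that remark. One caveat: your closing claim that the minimality condition~(13) is dispensable is too strong --- (13) constrains the spectral densities, not the coefficient sequence, and is still needed for the Fourier coefficients $P_{k,j}^{\overline{\mu}}$ and the inverse $\me P_{\overline{\mu}}^{-1}$ appearing in (20) and (22) to be well defined.
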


\begin{nas} \label{nas xi_f_st.n_d}
The optimal linear estimate $\widehat{\xi}_p(-N)$ of an unobserved value
$ \xi_p(-N)$, $N\geq0$, of the stochastic vector sequence $\vec\xi(m)$ with GM stationary increments based on observations of the sequence $\vec\xi(m)+\vec\eta(m)$ at points $m=0,-1,-2,\ldots$, where the noise sequence $\vec\eta(m)$ is uncorrelated with $\vec\xi(m)$, is calculated by the formula
\be\label{otsinka A_p_f_st.n_d}
 \widehat{\xi}_p(-N)=(\xi_p(-N)+\eta_p(-N))-\ip
 (\vec h_{\overline{\mu},N,p})^{\top}(\lambda)dZ_{\xi^{(n)}+\eta^{(n)}}(\lambda).\ee
The spectral characteristic of the  estimate $\widehat{\xi}_p(-N)$ is calculated by the formula
\be\label{spectr A_p_f_st.n_d}
 (\vec h_{\overline{\mu},N,p}(\lambda))^{\top}=
 \ld[e^{-i\lambda N }\chi_{\overline{\mu}}^{(d)}(e^{i\lambda})
 (\vec\delta_p)^{\top}g(\lambda)
 -(C_{N,p}^{\overline{\mu}}(e^{i\lambda}))^{\top}\rd]p^{-1}(\lambda)\frac{\overline{\beta^{(d)}(i\lambda)}
}{\chi_{\overline{\mu}}^{(d)}(e^{i\lambda})},\ee
where
\[
 C_{N,p}^{\overline{\mu}}(e^{i\lambda})=\sum_{k=0}^{\infty}
 \ld(\me P_{\overline{\mu}}^{-1}\me S_{\mu,N} \vec {\me a}_{\overline{\mu},N,p}\rd)_k e^{i\lambda
 (k+1)},\]
 \[
     \vec {\me a}_{\overline{\mu},N,p}=(0,\ldots,0,(\vec  a_{\overline{\mu},N,p}(N))^{\top},(\vec  a_{\overline{\mu},N,p}(N+1))^{\top},\ldots, (\vec  a_{\overline{\mu},N,p}(N+n(\gamma)))^{\top},0,\ldots )^{\top},\]
\[
   \vec   a_{\overline{\mu},N,p}(k)=\vec a_{-\overline{\mu},N,p}(k- n(\gamma)),\quad N\leq k\leq N+n(\gamma),\]
\[
 \vec a_{-\overline{\mu},N,p}(m)=e_{\gamma}(N-m)\vec \delta_p,\quad  N-n(\gamma)\leq m\leq N.\]

The value of the mean square error of the estimate $\widehat{\xi}_p(-N)$ is calculated by the formula
\begin{eqnarray}
\notag \Delta\ld(f,g;\widehat{\xi}_p(-N)\rd)&=&\Delta\Big(f,g;\widehat{\eta}_p(-N)\Big)= \mt E\Big|\eta_p(-N)-\widehat{\eta}_p(-N)\Big|^2
 \\&=& \ld\langle \me S_{\overline{\mu},N} \me
 a_{\overline{\mu},N.p},\me P_{\overline{\mu}}^{-1}\me S_{\overline{\mu},N}\me a_{\overline{\mu},N,p}\rd\rangle+\ld\langle Q_{0,0}\vec\delta_p,\vec\delta_p\rd\rangle.
\label{poh A_p_f_st.n_d}
\end{eqnarray}
\end{nas}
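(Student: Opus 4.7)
The plan is to derive Corollary 3.2 as a direct specialization of Corollary 3.1, by choosing the coefficient sequence of the functional $A_N\vec\xi$ to pick out exactly the $p$th coordinate of $\vec\xi(-N)$. Concretely, I will set $\vec a(N) = \vec\delta_p$ and $\vec a(k) = \vec 0$ for all $k \neq N$, $0\leq k\leq N$, so that
\[
A_N\vec\xi \;=\; \sum_{k=0}^{N} (\vec a(k))^\top \vec\xi(-k) \;=\; \xi_p(-N).
\]
With this choice, the representation (\ref{otsinka A_N_f_st.n_d}) immediately reduces to (\ref{otsinka A_p_f_st.n_d}), since only the $k=N$ term in the leading sum survives and the spectral integral uses the specialised characteristic $\vec h_{\overline{\mu},N,p}(\lambda)$.

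Next I will compute each data object appearing in Corollary 3.1 under this restriction. The Fourier-type polynomial becomes
\[
A_N(e^{-i\lambda}) \;=\; \sum_{k=0}^{N} \vec a(k)\,e^{-i\lambda k} \;=\; e^{-i\lambda N}\vec\delta_p,
\]
which substituted in (\ref{spectr A_N_f_st.n_d}) gives the first bracketed term of (\ref{spectr A_p_f_st.n_d}). For the coefficients $\vec a_{-\overline\mu,N}(m)$ defined by (\ref{coeff a_N_mu_f_st.n_d}), observe that the sum $\sum_{l=\max\{m,0\}}^{\min\{m+n(\gamma),N\}} e_\gamma(l-m)\vec a(l)$ collapses: only the term $l=N$ contributes, which happens exactly when $N-n(\gamma)\leq m\leq N$, yielding $\vec a_{-\overline\mu,N,p}(m) = e_\gamma(N-m)\vec\delta_p$ in that range and $\vec 0$ otherwise. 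Shifting by $n(\gamma)$ via $\vec a_{\overline\mu,N,p}(k) = \vec a_{-\overline\mu,N,p}(k-n(\gamma))$ then produces a vector $\vec{\me a}_{\overline\mu,N,p}$ whose only nonzero block of length $n(\gamma)+1$ sits at positions $N,\dots,N+n(\gamma)$, matching the description in the statement. Plugging $\vec{\me a}_{\overline\mu,N,p}$ into the definition of $C_N^{\overline\mu}$ in Corollary 3.1 produces $C_{N,p}^{\overline\mu}(e^{i\lambda}) = \sum_{k=0}^\infty (\me P_{\overline\mu}^{-1}\me S_{\overline\mu,N}\vec{\me a}_{\overline\mu,N,p})_k e^{i\lambda(k+1)}$, which is the quantity appearing in (\ref{spectr A_p_f_st.n_d}).

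Finally, the mean square error formula (\ref{poh A_p_f_st.n_d}) is obtained from (\ref{poh A_N_f_st.n_d}) by the same specialisation: the inner product $\langle \me S_{\overline\mu,N}\me a_{\overline\mu,N}, \me P_{\overline\mu}^{-1}\me S_{\overline\mu,N}\me a_{\overline\mu,N}\rangle$ becomes the corresponding expression with $\me a_{\overline\mu,N,p}$, while $\langle \me Q_N \me a_N,\me a_N\rangle$ collapses to $\langle Q_{N,N}\vec\delta_p,\vec\delta_p\rangle$, which by translation invariance of the Fourier coefficients $Q_{k,j}$ equals $\langle Q_{0,0}\vec\delta_p,\vec\delta_p\rangle$.

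The routine part is the coefficient bookkeeping around the incremental operator $\chi_{\overline\mu,\overline s}^{(d)}(B)$; the main point to check carefully is the range of indices on which $e_\gamma(N-m)$ is nonzero (namely $0\leq N-m\leq n(\gamma)$, giving $N-n(\gamma)\leq m\leq N$) and that this range is consistent with the truncation $m\leq N$ imposed by $\vec a(l)=\vec 0$ for $l>N$. Once these bounds agree, every formula in Corollary 3.2 follows mechanically from Corollary 3.1, so no new analytical input is required beyond Theorem 3.1.
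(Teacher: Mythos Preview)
Your proposal is correct and follows exactly the approach the paper indicates: the text preceding Corollary~3.1 states that the single-value case is obtained by setting $\vec a(N)=\vec\delta_p$ and $\vec a(k)=\vec 0$ for $k\neq N$, and you have carried out precisely that specialization, including the correct observation that $Q_{N,N}=Q_{0,0}$ since $Q_{k,j}$ depends only on $j-k$.
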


\begin{zau}
The filtering problem in the presence of fractional integration can be solved by  Theorem 3.1 and Corollaries 3.1, 3.2 under the conditions of  Theorem 2.2 on the increment  orders $d_i$.
\end{zau}

\subsection{Filtering of periodically stationary GM increment}

Consider \textbf{the filtering problem} for the functionals
\begin{equation}
A{\xi}=\sum_{k=0}^{\infty}{a}^{(\xi)}(k)\xi(-k), \quad
A_{M}{\xi}=\sum_{k=0}^{N}{a}^{(\xi)}(k)\xi(-k)
\end{equation}
which depend on unobserved values of the stochastic sequence $\xi(m)$ with periodically stationary
GM increments. Estimates are based on observations of the sequence $\xi(m)+\eta(m)$ at points $m=0,-1,-2,\ldots$, where the periodically stationary noise sequence $ \eta(m)$ is uncorrelated with $ \xi(m)$.

The functional $A{\xi}$ can be represented in the form
\begin{eqnarray}
\nonumber
A{\xi}& = &\sum_{k=0}^{\infty}{a}^{(\xi)}(k)\xi(-k)=\sum_{m=0}^{\infty}\sum_{p=1}^{T}
{a}^{(\xi)}(mT+p-1)\xi(-mT-p+1)
\\\nonumber
& = & \sum_{m=0}^{\infty}\sum_{p=1}^{T}a_p(m)\xi_p(-m)=\sum_{m=0}^{\infty}(\vec{a}(m))^{\top}\vec{\xi}(-m)
=A\vec{\xi},
\end{eqnarray}
where
\be \label{zeta1}
\vec{\xi}(m)=({\xi}_1(m),{\xi}_2(m),\dots,{\xi}_T(m))^{\top},\,
 {\xi}_p(m)=\xi(mT+p-1);\,p=1,2,\dots,T;
\ee
\be \label{azeta}
 \vec{a}(m) =({a}_1(m),{a}_2(m),\dots,{a}_T(m))^{\top},\,
 {a}_p(m)=a^{(\xi)}(mT+p-1);\,p=1,2,\dots,T.
\ee

In the same way, the functional $A{\eta}$ is represented as
\[
A{\eta}=\sum_{k=0}^{\infty}{a}^{(\xi)}(k)\eta(-k)=\sum_{m=0}^{\infty}(\vec{a}(m))^{\top}\vec{\eta}(-m)
=A\vec{\eta},
\]
\be \label{zeta2}
\vec{\eta}(m)=({\eta}_1(m),{\eta}_2(m),\dots,{\eta}_T(m))^{\top},\,
 {\eta}_p(m)=\eta(mT+p-1);\,p=1,2,\dots,T.
\ee

Making use of the introduced notations and statements of Theorem 3.1  we can claim that the following theorem holds true.

\begin{thm}
\label{thm_est_Azeta}
Let a stochastic sequence ${\xi}(m)$ with periodically stationary GM increments and a stochastic periodically stationary sequence ${\eta}(m)$   generate by formulas (27) and (29)
  vector-valued stochastic sequences $\vec{\xi}(m) $ and $\vec{\eta}(m) $ with  the spectral densities matrices $f(\lambda)=\{f_{ij}(\lambda)\}_{i,j=1}^{T}$ and $g(\lambda)=\{g_{ij}(\lambda)\}_{i,j=1}^{T}$. A solution $\widehat{A} \xi$ to the filtering problem for the functional $A\xi=A\vec \xi$ under conditions  (12) and (13) is  calculated by formula
 (15) for the coefficients $\vec a(m)$, $m\geq0$, defined in (28).
The spectral characteristic
$ \vec h_{\overline{\mu}}(\lambda)=\{h_{p}(\lambda)\}_{p=1}^{T}$ and the value of the mean square error $\Delta(f;\widehat{A}\xi)$ of the   estimate $\widehat{A}\xi$ are calculated by formulas
(17) and (18) respectively.
\end{thm}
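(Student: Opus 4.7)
The plan is to reduce the filtering problem for the scalar sequence $\xi(m)$ with periodically stationary GM increments to the vector-valued problem already solved in Theorem 3.1, by exploiting the correspondence (4) between $\xi$ and the blocked sequence $\vec{\xi}$ of length $T$. First I would verify that the scalar functional $A\xi$, once re-indexed as $k=mT+p-1$ with $p\in\{1,\dots,T\}$ and $m\geq 0$, coincides with the vector functional $A\vec\xi=\sum_{m=0}^{\infty}(\vec a(m))^{\top}\vec\xi(-m)$ for the coefficients $\vec a(m)$ defined in (28); an identical rewriting turns $A\eta$ into $A\vec\eta$ via (29). This gives equality of the target random variables in the Hilbert space $H$.

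Next I would check that the observation subspace is preserved by the blocking. The scalar observations $\{\xi(k)+\eta(k):k\leq 0\}$ enumerate exactly the coordinates $\{\xi_p(-m)+\eta_p(-m):p=1,\dots,T,\, m\geq 0\}$, since every integer $k\leq 0$ is uniquely written as $k=-mT-p+1$ with $p\in\{1,\dots,T\}$, $m\geq 0$. Consequently the closed linear span in $H$ generated by these observations equals $H^{0}(\vec\xi+\vec\eta)$, and the corresponding GM increments $\chi_{\overline{\mu},T\overline{s}}^{(d)}(\xi(k))+\chi_{\overline{\mu},T\overline{s}}^{(d)}(\eta(k))$ with $k\leq 0$ coincide, through the computation shown immediately after (4), with the vector GM increments $\chi_{\overline{\mu},\overline{s}}^{(d)}(\vec\xi(m))+\chi_{\overline{\mu},\overline{s}}^{(d)}(\vec\eta(m))$, $m\leq 0$. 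So the subspace $H^{0}(\xi^{(d)}+\eta^{(d)})$ of the scalar formulation is identified with the subspace appearing at Stage (ii) of the vector problem.

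I then translate the standing hypotheses. Summability of $\{\vec a(m)\}$ in (12) follows from the original summability of $\{a^{(\xi)}(k)\}$ after regrouping by $T$; the minimality condition (13) is assumed directly in terms of the $T\times T$ spectral density matrices $f(\lambda)$ and $g(\lambda)$ of the blocked sequences. With target, subspace, and hypotheses matched, Theorem 3.1 applies verbatim: the orthogonal projection characterization yields the representation (15), the spectral characteristic (17), and the mean square error (18), now with $\vec a(m)$ given by (28) and the matrices $\me S_{\overline{\mu}}$, $\me P_{\overline{\mu}}$, $\me Q$ built from $f$ and $g$ as in Section 3.1.

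The only nontrivial point I expect is the observation-subspace identification, because one must be careful that the scalar-time increments $\chi_{\overline{\mu},T\overline{s}}^{(d)}$ (with seasonal vector $T\overline{s}$) indeed coincide, coordinatewise, with the vector-time increments $\chi_{\overline{\mu},\overline{s}}^{(d)}$; this is exactly the computation carried out after Definition 2.3 and is what legitimizes the reduction. Once that equality of subspaces is in place, no fresh projection argument is needed, and the theorem follows as a corollary of Theorem 3.1.
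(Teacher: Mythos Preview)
Your proposal is correct and follows exactly the paper's approach: the paper itself gives no independent proof of this theorem but simply states that, ``making use of the introduced notations and statements of Theorem~3.1,'' the result holds, i.e., it is obtained by the blocking transformation (4) that rewrites $A\xi$ as $A\vec\xi$ with the coefficients (28) and then invokes Theorem~3.1. If anything, your write-up is more detailed than the paper's, particularly in justifying the identification of the observation subspaces via the equality $\chi_{\overline{\mu},T\overline{s}}^{(d)}(\xi(mT+p-1))=\chi_{\overline{\mu},\overline{s}}^{(d)}(\xi_p(m))$ established after Definition~2.3.
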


The functional $A_M{\xi}$ can be represented in the form
\begin{eqnarray}
\nonumber
A_M{\xi}& = &\sum_{k=0}^{M}{a}^{(\xi)}(k)\zeta(-k)=\sum_{m=0}^{N}\sum_{p=1}^{T}
{a}^{(\xi)}(mT+p-1)\xi(-mT-p+1)
\\\nonumber
& = &\sum_{m=0}^{N}\sum_{p=1}^{T}a_p(m)\xi_p(-m)=\sum_{m=0}^{N}(\vec{a}(m))^{\top}\vec{\xi}(-m)=A_N\vec{\xi},
\end{eqnarray}
where $N=[\frac{M}{T}]$, the sequence   $\vec{\xi}(m) $ is determined by formula (27),
\begin{eqnarray}
\nonumber
(\vec{a}(m))^{\top}& = &({a}_1(m),{a}_2(m),\dots,{a}_T(m))^{\top},
\\\nonumber
 {a}_p(m)& = &a^{\zeta}(mT+p-1);\,0\leq m\leq N; 1\leq p\leq T;\,mT+p-1\leq M;
\\  {a}_p(N)& = &0;\quad
M+1\leq NT+p-1\leq (N+1)T-1;1\leq p\leq T. \label{aNzeta}
\end{eqnarray}

An estimate of a single unobserved value
$\xi(-M)$, $M\geq0$ of a stochastic sequence ${\xi}(m)$ with periodically stationary GM increments
is obtained by making use of the notations
$\xi(-M)=\xi_p(-N)=(\vec \delta_p)^{\top}\vec\xi(N)$, $N=[\frac{M}{T}]$, $p=M+1-NT$. We can conclude that the following corollaries hold true.

\begin{nas}
\label{nas_est_A_Nzeta}
Let a stochastic sequence ${\xi}(m)$ with periodically stationary GM increments and a stochastic periodically stationary sequence ${\eta}(m)$   generate by formulas (27) and (29)
  vector-valued stochastic sequences $\vec{\xi}(m) $ and $\vec{\eta}(m) $. A solution $\widehat{A}_M\xi$ to the filtering problem for  the functional $A_M\xi=A_N\vec{\xi}$  under condition (13) is  calculated by formula
 (19) for the coefficients $\vec a(m)$, $0\leq m \leq N$, defined in (30).
The spectral characteristic and the value of the mean square error of the estimate $\widehat{A}_M\xi$
are calculated by formulas  (20) and (22) respectively.
\end{nas}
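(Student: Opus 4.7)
The plan is to reduce the scalar filtering problem for the periodically stationary sequence $\xi(m)$ to the vector-valued filtering problem already handled in Corollary 3.1, and then read off the estimate, spectral characteristic and mean square error from that result.

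First I would verify the identity $A_M\xi=A_N\vec{\xi}$. Any index $k$ with $0\leq k\leq M$ can be written uniquely as $k=mT+p-1$ with $0\leq m\leq N=[M/T]$ and $1\leq p\leq T$; using the blocking $\xi_p(m)=\xi(mT+p-1)$ from (27), this rearrangement gives
\[
A_M\xi=\sum_{k=0}^{M}a^{(\xi)}(k)\xi(-k)=\sum_{m=0}^{N}(\vec{a}(m))^{\top}\vec{\xi}(-m)=A_N\vec{\xi},
\]
with vector coefficients $\vec a(m)$ defined as in (30). The padding condition $a_p(N)=0$ for $M+1\leq NT+p-1\leq (N+1)T-1$ is exactly what is needed to respect the original truncation of the scalar sum at $M$ when $M+1$ is not a multiple of $T$. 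The same rewriting applied to the noise, using (29), yields $A_M\eta=A_N\vec{\eta}$.

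Next I would verify that the hypotheses of Corollary 3.1 transfer correctly. The construction of Section 2.2 turns the periodically stationary GM increments of $\xi(m)$ into stationary GM increments of $\vec\xi(m)$, while $\vec\eta(m)$ becomes a vector-valued stationary sequence uncorrelated with $\vec\xi(m)$; the spectral density matrices $f(\lambda)$ and $g(\lambda)$ are precisely those appearing in the statement. The minimality condition (13) is imposed as a hypothesis, and the coefficient summability condition (12) is trivially satisfied because $\vec a(m)$ has finite support $\{0,1,\dots,N\}$. Therefore Corollary 3.1 applies verbatim, and the optimal estimate $\widehat{A}_M\xi$ coincides with $\widehat{A}_N\vec{\xi}$ as given by (19), its spectral characteristic is given by (20), and its mean square error by (22).

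The one place that needs genuine care, rather than a rewrite of sums, is the bookkeeping around the finitely supported coefficient vector $\me a_{\overline{\mu},N}$ from (21): I must check that the matrices $\me S_{\overline{\mu},N}$, $\me P_{\overline{\mu}}$ and $\me Q_N$ act on the zero-padded vector built from the $\vec a(m)$ of (30) consistently with the convention fixed in the vector setting, in particular that the auxiliary coefficients $\vec a_{-\overline{\mu},N}(m)$ (which involve the incremental operator kernel $e_\gamma$) are formed from the correct truncated sequence. Once that indexing is made to match, no further computation is needed; the corollary follows as a direct specialisation of the vector result.
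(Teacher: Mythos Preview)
Your proposal is correct and follows the same approach as the paper: the paper establishes the rewriting $A_M\xi=A_N\vec{\xi}$ via the blocking (27), (29), (30) immediately before stating the corollary and then simply invokes Corollary~3.1, without giving any further argument. Your additional remarks about checking the hypotheses (finite support makes (12) automatic) and the bookkeeping around $\me a_{\overline{\mu},N}$ are appropriate but go slightly beyond what the paper spells out.
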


\begin{nas}\label{nas zeta_e_d}
Let a stochastic sequence ${\xi}(m)$ with periodically stationary GM increments and a stochastic periodically stationary sequence ${\eta}(m)$   generate by formulas (27) and (29)
  vector-valued stochastic sequences $\vec{\xi}(m) $ and $\vec{\eta}(m) $. A solution $\widehat{\xi}(-M)$ to the filtering problem for an unobserved value $\xi(-M)=\xi_p(-N)=(\vec \delta_p)^{\top}\vec\xi(-N)$, $N=[\frac{M}{T}]$, $p=M+1-NT$,  under condition (13)
 is calculated by formula (23).
The spectral characteristic and the value of the mean square error of the estimate $\widehat{\xi}(-M)$ are calculated by   formulas
 (24) and (25) respectively.
\end{nas}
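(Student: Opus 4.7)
The plan is to reduce the scalar filtering problem for $\xi(-M)$ to the vector-valued filtering problem already solved in Corollary 3.2, by invoking the standard correspondence (27)--(29) between the scalar periodically stationary sequence and its vectorized counterpart. I will proceed in three short steps.

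First, I will translate the target of estimation. Writing $N=[M/T]$ and $p=M+1-NT$ so that $M=NT+p-1$ and $1\leq p\leq T$, I will use the definition (27) to identify the unobserved scalar value $\xi(-M)$ with the single coordinate $(\vec\delta_p)^{\top}\vec\xi(-N)$ of the vector-valued sequence $\vec\xi$. Second, I will match the observation spaces: the scalar observations $\{\xi(k)+\eta(k):k\leq 0\}$ and the vector-valued observations $\{\vec\xi(k)+\vec\eta(k):k\leq 0\}$ are related by a deterministic bijective reindexing via (27) and (29), so they generate the same closed linear subspace of $L_2(\Omega,\md F,\mt P)$. Consequently, the orthogonal projection of $\xi(-M)$ onto the subspace spanned by scalar observations coincides with the orthogonal projection of $(\vec\delta_p)^{\top}\vec\xi(-N)$ onto the subspace $H^0(\xi^{(d)}+\eta^{(d)})$ used in the vector theory.

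Third, I will apply Corollary 3.2 directly to this specialized target. Taking the coefficient vector $\vec a(N)=\vec\delta_p$ and $\vec a(k)=\vec 0$ for $k\neq N$, formulas (23), (24), and (25) of that corollary immediately deliver the estimate, its spectral characteristic, and the value of its mean square error, which are exactly the objects claimed in the present statement. The minimality condition (13) ensures that the inverse $\me P_{\overline{\mu}}^{-1}$ is meaningful and that the resulting mean square error is finite, so the estimate is well defined. The main obstacle here is purely notational rather than analytical: all spectral and projection machinery was already built in Theorem 3.1 and its corollaries, and the only care required is in tracking the scalar-to-vector reindexing and in verifying that this reindexing preserves the closed linear span of the observations.
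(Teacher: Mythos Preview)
Your proposal is correct and follows precisely the paper's own route: the paper derives Corollary~\ref{nas zeta_e_d} simply by invoking the identification $\xi(-M)=\xi_p(-N)=(\vec\delta_p)^{\top}\vec\xi(-N)$ with $N=[M/T]$, $p=M+1-NT$, and then appealing to Corollary~\ref{nas xi_f_st.n_d} for the vector sequence. Your Steps~1 and~3 reproduce exactly this reduction, while your Step~2 (matching the observation subspaces under the reindexing (27)--(29)) merely makes explicit what the paper leaves implicit throughout Section~3.2.
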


\section{Minimax (robust) method of filtering}\label{minimax_filtering}

Solutions of the problem of  estimating the functionals ${A}\vec\xi$ and ${A}_N\vec\xi$ constructed from unobserved values of the stochastic sequence $\vec{\xi}(m)$ with  stationary  GM increments
$\chi_{\overline{\mu},\overline{s}}^{(d)}(\vec{\xi}(m))$ having the spectral density matrix $f(\lambda)$
based on its observations with stationary  noise
$\vec\xi(m)+\vec\eta(m)$ at points $m=0,-1,-2,\dots$ are  proposed in Theorem 3.1 and Corollary 3.1  in the case where the spectral density matrices
$f(\lambda)$ and $g(\lambda)$ of the target sequence  and the noise are exactly known.

In this section, we study the case where the  complete information about the spectral density matrices is not available while the set   of admissible spectral densities $\md D=\md D_f\times\md D_g$ is known.
The minimax approach of estimation of the functionals from  unobserved values of stochastic sequences is considered, which
consists in finding an estimate that minimizes
the maximal values of the mean square errors for all spectral densities
from a class $\md D$ simultaneously. This method will be applied for the concrete classes of spectral densities.

The proceed with the stated problem, we recall the following definitions [33].

\begin{ozn}
For a given class of spectral densities $\mathcal{D}=\md
D_f\times\md D_g$, the spectral densities
$f^0(\lambda)\in\mathcal{D}_f$, $g^0(\lambda)\in\md D_g$
are called the least favourable densities in the class $\mathcal{D}$ for
optimal linear filtering of the functional $A\xi$ if the following relation holds true
\[
 \Delta(f^0,g^0)=\Delta(h(f^0,g^0);f^0,g^0)=
 \max_{(f,g)\in\mathcal{D}_f\times\md
 D_g}\Delta(h(f,g);f,g).\]
\end{ozn}

\begin{ozn}
For a given class of spectral
densities $\mathcal{D}=\md D_f\times\md D_g$ the spectral
characteristic $h^0(\lambda)$ of the optimal estimate of the functional
$A\xi$ is called minimax (robust) if the following relations hold true
\[
 h^0(\lambda)\in H_{\mathcal{D}}
 =\bigcap_{(f,g)\in\mathcal{D}_f\times\md D_g}L_2^{0}(p),\]
\[
 \min_{h\in H_{\mathcal{D}}}\max_{(f,g)\in \mathcal{D}_f\times\md D_g}\Delta(h;f,g)
 =\max_{(f,g)\in\mathcal{D}_f\times\md D_g}\Delta(h^0;f,g).\]
\end{ozn}

Taking into account the introduced definitions and the relations derived in the previous sections  we can verify that the following lemma holds true.

\begin{lema}
The spectral densities $f^0\in\mathcal{D}_f$,
$g^0\in\mathcal{D}_g$ which satisfy the minimality condition (13) are the least favourable spectral densities in the class $\mathcal{D}$ for the optimal
linear filtering of the functional $A\xi$  if the matrices $\me P_{\overline{\mu}}^0$, $\me S_{\overline{\mu}}^0$, $\me Q^0$ defined by
the Fourier coefficients of the functions
\[
 |\beta^{(d)}(i\lambda)|^2|\chi_{\overline{\mu}}^{(d)}(e^{-i\lambda})|^{-2}(p^0(\lambda))^{-1}, \,
 |\beta^{(d)}(i\lambda)|^2|\chi_{\overline{\mu}}^{(d)}(e^{-i\lambda})|^{-2}g^0(\lambda)(p^0(\lambda))^{-1},\,
 f^0(\lambda)g^0(\lambda)p^0(\lambda),\]
where
\[p^0(\lambda):=f^0(\lambda)+|\beta^{(d)}(i\lambda)|^2g^0(\lambda),\]
determine a solution of the constrained optimization problem
\be
 \max_{(f,g)\in \mathcal{D}_f\times\md D_g}\ld(\ld\langle \me S_{\overline{\mu}}
 \me a_{\overline{\mu}},\me P_{\overline{\mu}}^{-1}\me S_{\overline{\mu}}\me a_{\overline{\mu}}\rd\rangle+\ld\langle\me Q\me
 a,\me a\rd\rangle\rd)= \ld\langle \me S^0_{\overline{\mu}}
 \me a_{\overline{\mu}},(\me P^0_{\overline{\mu}})^{-1}\me S^0_{\overline{\mu}}\me a_{\overline{\mu}}\rd\rangle+\ld\langle\me Q^0\me
 a,\me a\rd\rangle.
\label{minimax1_e_d}
\ee
The minimax spectral characteristic $h^0=h_{\overline{\mu}}(f^0,g^0)$ is calculated by formula (17) if
$h_{\overline{\mu}}(f^0,g^0)\in H_{\mathcal{D}}$.
\end{lema}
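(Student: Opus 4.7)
The plan is to establish Lemma~4.1 by combining Theorem~3.1 with a standard saddle-point argument organised around Definitions~4.1 and~4.2.

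First, I would invoke Theorem~3.1: under the minimality condition~(13), for every admissible pair $(f,g) \in \md D$ the optimal linear filter $\widehat{A}\vec\xi$ has spectral characteristic $h_{\overline{\mu}}(f,g)$ given by (17) and mean square error
\[
\Delta\bigl(h_{\overline{\mu}}(f,g);f,g\bigr) = \bigl\langle \me S_{\overline{\mu}}\,\me a_{\overline{\mu}},\,\me P_{\overline{\mu}}^{-1}\me S_{\overline{\mu}}\,\me a_{\overline{\mu}}\bigr\rangle + \bigl\langle \me Q\,\me a,\,\me a\bigr\rangle,
\]
with the matrices on the right hand side depending on $(f,g)$ precisely through the Fourier coefficient integrals listed in the statement. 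Hence the hypothesis that $\me P_{\overline{\mu}}^{0}$, $\me S_{\overline{\mu}}^{0}$, $\me Q^{0}$ solve~(31) translates to
\[
\Delta\bigl(h_{\overline{\mu}}(f^{0},g^{0});f^{0},g^{0}\bigr) = \max_{(f,g)\in\md D}\Delta\bigl(h_{\overline{\mu}}(f,g);f,g\bigr),
\]
which is exactly Definition~4.1 specialised to the optimal filter. This already yields the least-favourability claim.

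For the minimax conclusion I would set $h^{0}:=h_{\overline{\mu}}(f^{0},g^{0})$ and use the standing hypothesis $h^{0}\in H_{\md D}$. Because $h_{\overline{\mu}}(f,g)$ arises as an orthogonal projection in Theorem~3.1, it minimises $\Delta(\cdot;f,g)$ over $H_{\md D}$ for each fixed $(f,g)$. The elementary max--min inequality then gives
\[
\Delta(h^{0};f^{0},g^{0}) = \max_{(f,g)\in\md D}\min_{h\in H_{\md D}}\Delta(h;f,g)\leq \min_{h\in H_{\md D}}\max_{(f,g)\in\md D}\Delta(h;f,g).
\]
To upgrade to equality I would need the reverse bound, which reduces to showing that $(f^{0},g^{0})$ also maximises $\Delta(h^{0};\,\cdot\,,\,\cdot\,)$ over $\md D$ with $h^{0}$ held fixed. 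Once this saddle-point identity is in hand, inserting the specific choice $h=h^{0}$ into the right hand side above closes the loop and Definition~4.2 is satisfied.

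The main obstacle is precisely this last step, because (31) treats $h$ as optimally adapted to $(f,g)$ rather than frozen at $h^{0}$. I would address it by writing $\Delta(h^{0};f,g)$ in the explicit integral form appearing in the middle lines of (18) with $h^{0}$ substituted throughout, observing that it is a quadratic functional in the densities, and then combining the convexity of the admissible sets $\md D_f$, $\md D_g$ with the first-order (Lagrange) stationarity conditions for the outer maximisation in (31). This stationarity, together with concavity of the quadratic functional in $(f,g)$ on convex $\md D$, forces $(f^{0},g^{0})$ to be a maximiser also when $h$ is frozen at $h^{0}$, completing the saddle-point argument and identifying formula~(17) evaluated at $(f^{0},g^{0})$ as the minimax spectral characteristic.
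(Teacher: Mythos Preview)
Your treatment of the least-favourable part is correct and is precisely the paper's (implicit) argument: the paper offers no detailed proof of the lemma, stating only that it follows from the introduced definitions and the relations of the previous sections. Concretely, formula~(18) of Theorem~3.1 identifies $\Delta(h_{\overline{\mu}}(f,g);f,g)$ with $\langle \me S_{\overline{\mu}}\me a_{\overline{\mu}},\me P_{\overline{\mu}}^{-1}\me S_{\overline{\mu}}\me a_{\overline{\mu}}\rangle+\langle\me Q\me a,\me a\rangle$, so hypothesis~(31) is literally Definition~4.1. Your saddle-point set-up for the minimax claim also mirrors what the paper does, but note that the paper places this discussion \emph{after} the lemma rather than inside its proof, and there it explicitly imposes the additional condition~(32), namely that $(f^{0},g^{0})$ maximise $\Delta(h_{\overline{\mu}}(f^{0},g^{0});\,\cdot\,,\,\cdot\,)$ over $\md D$, as a separate hypothesis; it does not attempt to derive this from~(31).

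This is where your final paragraph overreaches. You try to extract the saddle-point inequality $\Delta(h^{0};f,g)\le\Delta(h^{0};f^{0},g^{0})$ from~(31) alone via ``concavity of the quadratic functional'' and ``Lagrange stationarity''. The functional $\Delta(h^{0};f,g)$ is in fact linear (not genuinely quadratic) in $(f,g)$ once $h^{0}$ is frozen, so concavity is trivial; the real work is showing that first-order optimality of $(f^{0},g^{0})$ for the \emph{adapted} error $\Delta(h_{\overline{\mu}}(f,g);f,g)$ transfers to first-order optimality for the \emph{frozen} error $\Delta(h^{0};f,g)$. That transfer is an envelope-theorem statement, and it is delicate here because the space $L_2^{0}(p)$ over which $h_{\overline{\mu}}(f,g)$ is the projection minimiser itself varies with $(f,g)$. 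The paper sidesteps all of this by simply postulating~(32) and then passing to the unconstrained problem~(34) and the subdifferential condition $0\in\partial\Delta_{\md D}(f^{0},g^{0})$. So your plan is not wrong in spirit, but the closing step is not the paper's argument and, as written, is too vague to stand on its own.
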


The more detailed analysis of properties of the least favorable spectral densities and the minimax-robust spectral characteristics shows that the minimax spectral characteristic $h^0$ and the least favourable spectral densities $f^0$ and $g^0$ form a saddle
point of the function $\Delta(h;f,g)$ on the set
$H_{\mathcal{D}}\times\mathcal{D}$.
The saddle point inequalities
\[
 \Delta(h;f^0,g^0)\geq\Delta(h^0;f^0,g^0)\geq\Delta(h^0;f,g)\quad\forall (f,g)\in
 \mathcal{D},\forall h\in H_{\mathcal{D}}\]
hold true if $h^0=h_{\overline{\mu}}(f^0,g^0)$,
$h_{\overline{\mu}}(f^0,g^0)\in H_{\mathcal{D}}$ and $(f^0,g^0)$ is a solution of the constrained optimization problem
\be
 \widetilde{\Delta}(f)=-\Delta(h_{\overline{\mu}}(f^0,g^0);f,g)\to
 \inf,\quad (f,g)\in \mathcal{D},\label{zad_um_extr_e_d}
 \ee
where the functional $\Delta(h_{\overline{\mu}}(f^0,g^0);f,g)$ is calculated by the formula

\begin{eqnarray}
 \notag && \Delta(h_{\overline{\mu}}(f^0,g^0);f,g)
 \\
 \notag&=&\frac{1}{2\pi}\ip\frac{|\beta^{(d)}(i\lambda)|^2}{|\chi_{\overline{\mu}}^{(d)}(e^{-i\lambda})|^2}
 \ld(\chi_{\overline{\mu}}^{(d)}(e^{i\lambda})(A(e^{-i\lambda }))^{\top}g^0(\lambda)-(C^0_{\overline{\mu}}(e^{i\lambda}))^{\top}\rd)
\\
\notag &\quad&\times (p^0(\lambda))^{-1} f(\lambda)(p^0(\lambda))^{-1}\ld(\chi_{\overline{\mu}}^{(d)}(e^{-i\lambda})g^0(\lambda)A(e^{i\lambda })-C^0_{\overline{\mu}}(e^{-i\lambda})\rd)d\lambda
 \\
 \notag&&+\frac{1}{2\pi}\ip
 \frac{1}{|\chi_{\overline{\mu}}^{(d)}(e^{-i\lambda})|^2}
\ld(\chi_{\overline{\mu}}^{(d)}(e^{i\lambda})(A(e^{-i\lambda }))^{\top} f^0(\lambda)+|\beta^{(d)}(i\lambda)|^2(C^0_{\overline{\mu}}(e^{i\lambda}))^{\top}\rd)
\\
\notag &\quad &\times (p^0(\lambda))^{-1} g(\lambda)(p^0(\lambda))^{-1}\ld(\chi_{\overline{\mu}}^{(d)}(e^{-i\lambda})f^0(\lambda) A(e^{i\lambda })  +|\beta^{(d)}(i\lambda)|^2 C^0_{\overline{\mu}}(e^{-i\lambda}) \rd) d\lambda
 \\
 &=&\ld\langle \me S^0_{\overline{\mu}}
 \me a_{\overline{\mu}}, \ld(\me P^0_{\overline{\mu}}\rd)^{-1}\me S^0_{\overline{\mu}}\me a_{\overline{\mu}}\rd\rangle+\ld\langle\me Q^0\me a,\me
 a\rd\rangle\end{eqnarray}
with
\[
 C^0_{\overline{\mu}}(e^{i\lambda})=\sum_{k=0}^{\infty} \ld(\ld(\me
 P^0_{\overline{\mu}}\rd)^{-1}\me S^0_{\overline{\mu}}\me a_{\overline{\mu}}\rd)_ke^{i\lambda
 (k+1)}.\]

The constrained optimization problem (32) is equivalent to the unconstrained optimization problem
\be \label{zad_unconst_extr_f_st_d}
 \Delta_{\mathcal{D}}(f,g)=\widetilde{\Delta}(f,g)+ \delta(f,g|\mathcal{D})\to\inf,\ee
where $\delta(f,g|\mathcal{D})$ is the indicator function of the set
$\mathcal{D}$, namely $\delta(f|\mathcal{D})=0$ if $f\in \mathcal{D}$ and $\delta(f|\mathcal{D})=+\infty$ if $(f,g)\notin \mathcal{D}$.
The condition
 $0\in\partial\Delta_{\mathcal{D}}(f^0,g^0)$ characterizes a solution $(f^0,g^0)$ of the stated unconstrained optimization problem. This condition is the necessary and sufficient condition that the point $(f^0,g^0)$ belongs to the set of minimums of the convex functional $\Delta_{\mathcal{D}}(f,g)$
[10, 33, 34, 44].
Thus, we it allows us to find the equalities for the least favourable spectral densities in some special classes of spectral densities $\md D$.

The form of the functional $\Delta(h_{\overline{\mu}}(f^0,g^0);f,g)$ is suitable for application the Lagrange method of indefinite
multipliers to the constrained optimization problem (32).
Thus, the complexity of the problem is reduced to  finding the subdifferential of the indicator function of the set of admissible spectral densities. We illustrate the solving of the problem (34) for concrete sets admissible spectral densities  in the following subsections. A semi-uncertain filtering problem, when the spectral density $f(\lambda)$ is known and the spectral density $g(\lambda)$ belongs to  in class  $\md D_g$, is considered as well.

\subsection{Least favorable spectral density in classes $\md D_0\times\md D_{V}^{U}$}
\label{class_D0}

Consider the minimax filtering problem for the functional $A\vec{\xi}$
  for sets of admissible spectral densities $\md D_0^k$, $k=1,2,3,4$ of the sequence with GM increments $\vec \xi(m)$
$$\md D_{0}^{1} =\bigg\{f(\lambda )\left|\frac{1}{2\pi} \int
_{-\pi}^{\pi}
\frac{|\chi_{\overline{\mu}}^{(d)}(e^{-i\lambda})|^2}{|\beta^{(d)}(i\lambda)|^2}
f(\lambda )d\lambda  =P\right.\bigg\},$$
$$\md D_{0}^{2} =\bigg\{f(\lambda )\left|\frac{1}{2\pi }
\int _{-\pi }^{\pi}
\frac{|\chi_{\overline{\mu}}^{(d)}(e^{-i\lambda})|^2}{|\beta^{(d)}(i\lambda)|^2}
{\rm{Tr}}\,[ f(\lambda )]d\lambda =p\right.\bigg\},$$
$$\md D_{0}^{3} =\bigg\{f(\lambda )\left|\frac{1}{2\pi }
\int _{-\pi}^{\pi}
\frac{|\chi_{\overline{\mu}}^{(d)}(e^{-i\lambda})|^2}{|\beta^{(d)}(i\lambda)|^2}
f_{kk} (\lambda )d\lambda =p_{k}, k=\overline{1,T}\right.\bigg\},$$
$$\md D_{0}^{4} =\bigg\{f(\lambda )\left|\frac{1}{2\pi} \int _{-\pi}^{\pi}
\frac{|\chi_{\overline{\mu}}^{(d)}(e^{-i\lambda})|^2}{|\beta^{(d)}(i\lambda)|^2}
\left\langle B_{1} ,f(\lambda )\right\rangle d\lambda  =p\right.\bigg\},$$

\noindent
where  $p, p_k, k=\overline{1,T}$ are given numbers, $P, B_1$ are given positive-definite Hermitian matrices, and sets of admissible spectral densities $\md D_{V}^{U}$, $k=1,2,3,4$ for the stationary noise sequence $\vec \eta(m)$
\begin{equation*}
 {\md D_{V}^{U}} ^{1}=\left\{g(\lambda )\bigg|V(\lambda )\le g(\lambda
)\le U(\lambda ), \frac{1}{2\pi } \int _{-\pi}^{\pi}
g(\lambda )d\lambda=Q\right\},
\end{equation*}
\begin{equation*}
  {\md D_{V}^{U}} ^{2}  =\bigg\{g(\lambda )\bigg|{\mathrm{Tr}}\, [V(\lambda
)]\le {\mathrm{Tr}}\,[ g(\lambda )]\le {\mathrm{Tr}}\, [U(\lambda )],
\frac{1}{2\pi } \int _{-\pi}^{\pi}
{\mathrm{Tr}}\,  [g(\lambda)]d\lambda  =q \bigg\},
\end{equation*}
\begin{equation*}
{\md D_{V}^{U}} ^{3}  =\bigg\{g(\lambda )\bigg|v_{kk} (\lambda )  \le
g_{kk} (\lambda )\le u_{kk} (\lambda ),
\frac{1}{2\pi} \int _{-\pi}^{\pi}
g_{kk} (\lambda
)d\lambda  =q_{k} , k=\overline{1,T}\bigg\},
\end{equation*}
\begin{equation*}
{\md D_{V}^{U}} ^{4}  =\bigg\{g(\lambda )\bigg|\left\langle B_{2}
,V(\lambda )\right\rangle \le \left\langle B_{2},g(\lambda
)\right\rangle \le \left\langle B_{2} ,U(\lambda)\right\rangle,
\frac{1}{2\pi }
\int _{-\pi}^{\pi}
\left\langle B_{2},g(\lambda)\right\rangle d\lambda  =q\bigg\},
\end{equation*}
where the spectral densities $V( \lambda ),U( \lambda )$ are known and fixed, $ q$,  $q_k$, $k=\overline{1,T}$ are given numbers, $Q$, $B_2$ are given positive definite Hermitian matrices.

From the condition $0\in\partial\Delta_{\mathcal{D}}(f^0,g^0)$
we find the following equations which determine the least favourable spectral densities for these given sets of admissible spectral densities.

Define
\[C^{f0}_{\overline{\mu}}(e^{i\lambda}):=
\chi_{\overline{\mu}}^{(d)}(e^{i\lambda})(A(e^{-i\lambda }))^{\top}g^0(\lambda)-(C^0_{\overline{\mu}}(e^{i\lambda}))^{\top},
\]
\[C^{g0}_{\overline{\mu}}(e^{i\lambda}):=\frac{|\chi_{\overline{\mu}}^{(d)}(e^{-i\lambda})|^2}{|\beta^{(d)}(i\lambda)|^2}(A(e^{-i\lambda }))^{\top} f^0(\lambda)+\chi_{\overline{\mu}}^{(d)}(e^{-i\lambda})(C^0_{\overline{\mu}}(e^{i\lambda}))^{\top}.
\]
For the first pair of the sets of admissible spectral densities $\md D_{f0}^1\times {\md D_{V}^{U}} ^{1}$ we have equations
\begin{equation} \label{eq_4_1}
C^{f0}_{\overline{\mu}}(e^{i\lambda})
C^{f0}_{\overline{\mu}}(e^{i\lambda})^{*}
=\left(\frac{|\chi_{\overline{\mu}}^{(d)}(e^{-i\lambda})|^2}{|\beta^{(d)}(i\lambda)|^2} p^0(\lambda )\right)
\vec{\alpha}\cdot \vec{\alpha}^{*}\left(\frac{|\chi_{\overline{\mu}}^{(d)}(e^{-i\lambda})|^2}{|\beta^{(d)}(i\lambda)|^2} p^0(\lambda )\right),
\end{equation}
\begin{multline}
\label{eq_5_1}
 C^{g0}_{\overline{\mu}}(e^{i\lambda})C^{g0}_{\overline{\mu}}(e^{i\lambda})^{*}=\\
=
\left(\frac{|\chi_{\overline{\mu}}^{(d)}(e^{-i\lambda})|^2}{|\beta^{(d)}(i\lambda)|^2} p^0 (\lambda )\right)(\vec{\beta}\cdot \vec{\beta}^{*}+\Gamma _{1} (\lambda )+\Gamma _{2} (\lambda )) \left(\frac{|\chi_{\overline{\mu}}^{(d)}(e^{-i\lambda})|^2}{|\beta^{(d)}(i\lambda)|^2} p^0 (\lambda )\right),
\end{multline}
where $\vec{\alpha}$, $ \vec{\beta}$ are vectors of Lagrange multipliers, $\Gamma _{1} (\lambda )\le 0$ and $\Gamma _{1} (\lambda )=0$ if $g^0(\lambda )>V(\lambda )$, $
\Gamma _{2} (\lambda )\ge 0$ and $\Gamma _{2} (\lambda )=0$ if $g^0(\lambda )<U(\lambda )$.

For the second pair of the sets of admissible spectral densities $\md D_{f0}^2\times {\md D_{V}^{U}} ^{2}$ we have equation
\begin{equation}  \label{eq_4_2}
C^{f0}_{\overline{\mu}}(e^{i\lambda})C^{f0}_{\overline{\mu}}(e^{i\lambda})^{*}=
\alpha^{2} \left(\frac{|\chi_{\overline{\mu}}^{(d)}(e^{-i\lambda})|^2}{|\beta^{(d)}(i\lambda)|^2} p^0 (\lambda )\right)^{2},
\end{equation}
\begin{equation}   \label{eq_5_2}
C^{g0}_{\overline{\mu}}(e^{i\lambda})C^{g0}_{\overline{\mu}}(e^{i\lambda})^{*}
=
(\beta^{2} +\gamma _{1} (\lambda )+\gamma _{2} (\lambda ))\left(\frac{|\chi_{\overline{\mu}}^{(d)}(e^{-i\lambda})|^2}{|\beta^{(d)}(i\lambda)|^2} p^0 (\lambda )\right)^2,
\end{equation}
where $\alpha^{2}$, $ \beta^{2}$ are Lagrange multipliers,  $\gamma _{1} (\lambda )\le 0$ and $\gamma _{1} (\lambda )=0$ if ${\mathrm{Tr}}\,
[g^0(\lambda )]> {\mathrm{Tr}}\,  [V(\lambda )]$, $\gamma _{2} (\lambda )\ge 0$ and $\gamma _{2} (\lambda )=0$ if $ {\mathrm{Tr}}\,[g^0(\lambda )]< {\mathrm{Tr}}\, [ U(\lambda)]$.

For the third pair of the sets of admissible spectral densities $\md D_{f0}^3\times {\md D_{V}^{U}}^{3}$ we have equation

\begin{equation}   \label{eq_4_3}
C^{f0}_{\overline{\mu}}(e^{i\lambda})C^{f0}_{\overline{\mu}}(e^{i\lambda})^{*}
=
\left(\frac{|\chi_{\overline{\mu}}^{(d)}(e^{-i\lambda})|^2}{|\beta^{(d)}(i\lambda)|^2} p^0(\lambda )\right)\left\{\alpha _{k}^{2} \delta _{kl} \right\}_{k,l=1}^{T} \left(\frac{|\chi_{\overline{\mu}}^{(d)}(e^{-i\lambda})|^2}{|\beta^{(d)}(i\lambda)|^2} p^0(\lambda )\right),
\end{equation}
\begin{multline}   \label{eq_5_3}
C^{g0}_{\overline{\mu}}(e^{i\lambda})C^{g0}_{\overline{\mu}}(e^{i\lambda})^{*}=\\
=
\left(\frac{|\chi_{\overline{\mu}}^{(d)}(e^{-i\lambda})|^2}{|\beta^{(d)}(i\lambda)|^2} p^0 (\lambda )\right) \left\{(\beta_{k}^{2} +\gamma _{1k} (\lambda )+\gamma _{2k} (\lambda ))\delta _{kl}\right\}_{k,l=1}^{T} \left(\frac{|\chi_{\overline{\mu}}^{(d)}(e^{-i\lambda})|^2}{|\beta^{(d)}(i\lambda)|^2} p^0 (\lambda )\right),
\end{multline}
where  $\alpha _{k}^{2}$, $ \beta_{k}^{2}$ are Lagrange multipliers, $\delta _{kl}$ are Kronecker symbols, $\gamma _{1k} (\lambda )\le 0$ and $\gamma _{1k} (\lambda )=0$ if $g_{kk}^{0} (\lambda )>v_{kk} (\lambda )$, $\gamma _{2k} (\lambda )\ge 0$ and $\gamma _{2k} (\lambda )=0$ if $g_{kk}^{0} (\lambda )<u_{kk} (\lambda)$.

For the fourth pair of the sets of admissible spectral densities $\md D_{f0}^4\times {\md D_{V}^{U}}^{4}$ we have equation
\begin{equation}   \label{eq_4_4}
C^{f0}_{\overline{\mu}}(e^{i\lambda})C^{f0}_{\overline{\mu}}(e^{i\lambda})^{*}
=
\alpha^{2} \left(\frac{|\chi_{\overline{\mu}}^{(d)}(e^{-i\lambda})|^2}{|\beta^{(d)}(i\lambda)|^2} p^0 (\lambda )\right)B_{1}^{\top} \left(\frac{|\chi_{\overline{\mu}}^{(d)}(e^{-i\lambda})|^2}{|\beta^{(d)}(i\lambda)|^2} p^0(\lambda )\right),
\end{equation}
\begin{multline}    \label{eq_5_4}
C^{g0}_{\overline{\mu}}(e^{i\lambda})C^{g0}_{\overline{\mu}}(e^{i\lambda})^{*}=\\
=
(\beta^{2} +\gamma'_{1}(\lambda )+\gamma'_{2}(\lambda ))\left(\frac{|\chi_{\overline{\mu}}^{(d)}(e^{-i\lambda})|^2}{|\beta^{(d)}(i\lambda)|^2} p^0 (\lambda )\right) B_2^\top  \left(\frac{|\chi_{\overline{\mu}}^{(d)}(e^{-i\lambda})|^2}{|\beta^{(d)}(i\lambda)|^2} p^0 (\lambda )\right),
\end{multline}
where $\alpha^{2}$, $\beta^{2}$ are Lagrange multipliers, $\gamma'_{1}( \lambda )\le 0$ and $\gamma'_{1} ( \lambda )=0$ if $\langle B_{2},g^0( \lambda) \rangle > \langle B_{2},V( \lambda ) \rangle$, $\gamma'_{2}( \lambda )\ge 0$ and $\gamma'_{2} ( \lambda )=0$ if $\langle
B_{2} ,g^0( \lambda) \rangle < \langle B_{2} ,U( \lambda ) \rangle$.

The following theorem  holds true.

\begin{thm}
Let the minimality condition (13) hold true. The least favorable spectral densities $f^0(\lambda)$ and $g^0(\lambda)$ in the classes $ \md  D_0^{k}\times{\md D_{V}^{U}}^{k}$, $k=1,2,3,4$, for the optimal linear filtering  of the functional  $A\vec{\xi}$   are determined by  pairs of equations (35)-(36), (37)-(38), (39)-(40), (41)-(42),
the constrained optimization problem (31) and restrictions  on densities from the corresponding classes $ \md  D_0^{k}\times{\md D_{V}^{U}}^{k}$, $k=1,2,3,4$.  The minimax-robust spectral characteristic $h_{\overline{\mu}}(f^0,g^0)$ of the optimal estimate of the functional $A\vec{\xi}$ is determined by the formula (17).
\end{thm}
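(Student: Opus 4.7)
The approach is to invoke the convex-analytic characterization set up just before the theorem: by Lemma 4.1, it suffices to exhibit $(f^0,g^0)\in\md D_0^k\times{\md D_V^U}^k$ satisfying the integral and pointwise constraints for which $0\in\partial \widetilde\Delta(f^0,g^0)+\partial\delta\bigl(\cdot\mid \md D_0^k\times{\md D_V^U}^k\bigr)(f^0,g^0)$, and to check the admissibility $h_{\overline\mu}(f^0,g^0)\in H_{\md D}$. Under the minimality condition (13) this pair is then least favourable, and formula (17) defines the minimax-robust spectral characteristic. The theorem therefore reduces to writing out the subgradient equation explicitly for each $k=1,2,3,4$.

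The first step is to compute the Gâteaux derivatives of $\widetilde\Delta(f,g)=-\Delta(h_{\overline\mu}(f^0,g^0);f,g)$ in $f$ and in $g$ at the point $(f^0,g^0)$, starting from the explicit representation (33). Because $C^0_{\overline\mu}$, $\me P^0_{\overline\mu}$, $\me S^0_{\overline\mu}$ depend only on $f^0,g^0$, they remain frozen under perturbations of $f$ and $g$, so only the explicit occurrences of $f(\lambda)$ and $g(\lambda)$ contribute. Collecting the contributions of both integrals in (33), the linear part in an increment $\Delta f$ takes the form $\ip\tr[K_f(\lambda)\Delta f(\lambda)]\,d\lambda$ with
\[
K_f(\lambda)=\frac{(p^0(\lambda))^{-1}\,C^{f0}_{\overline\mu}(e^{i\lambda})\bigl(C^{f0}_{\overline\mu}(e^{i\lambda})\bigr)^{*}\,(p^0(\lambda))^{-1}}{|\chi_{\overline\mu}^{(d)}(e^{-i\lambda})|^{2}\,|\beta^{(d)}(i\lambda)|^{2}},
\]
and analogously the linear part in $\Delta g$ produces a kernel $K_g(\lambda)$ involving $C^{g0}_{\overline\mu}(e^{i\lambda})\bigl(C^{g0}_{\overline\mu}(e^{i\lambda})\bigr)^{*}$ with weight $|\chi_{\overline\mu}^{(d)}(e^{-i\lambda})|^{2}/|\beta^{(d)}(i\lambda)|^{2}$ and flanking factors of $(p^0(\lambda))^{-1}$.

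The second step is to identify the pointwise subdifferentials of the indicator functions of the admissible sets. For $\md D_0^k$ the single moment constraint is linear in $f$ with values in Hermitian matrices ($k=1$), scalars ($k=2$), diagonal matrices ($k=3$), or scalars via the inner product with $B_1$ ($k=4$). Duality gives, up to the weight $|\chi_{\overline\mu}^{(d)}(e^{-i\lambda})|^{2}/|\beta^{(d)}(i\lambda)|^{2}$, the Lagrange multipliers $\vec\alpha\cdot\vec\alpha^{*}$, $\alpha^{2}I$, $\{\alpha_k^{2}\delta_{kl}\}_{k,l=1}^{T}$, and $\alpha^{2}B_1^{\top}$ respectively. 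For ${\md D_V^U}^k$ the moment constraint similarly contributes $\vec\beta\cdot\vec\beta^{*}$, $\beta^{2}$, $\{\beta_k^{2}\delta_{kl}\}$, or $\beta^{2}B_2^{\top}$, while the pointwise bounds $V\le g^0\le U$ (and their trace/diagonal/inner-product versions) produce the sign-constrained multiplier functions $\Gamma_{1},\Gamma_{2}$ or $\gamma_{1},\gamma_{2}$ obeying the complementary-slackness rules recorded in the statement. Setting $K_f$ equal to $-\partial\delta(\cdot\mid\md D_0^k)$ and $K_g$ equal to $-\partial\delta(\cdot\mid{\md D_V^U}^k)$ and multiplying both sides by $p^0(\lambda)$ from the left and the right to clear the $(p^0)^{-1}$ factors recovers exactly the pairs of equations (35)--(36), (37)--(38), (39)--(40), (41)--(42). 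Combined with the integral and pointwise restrictions of the class and with the constrained optimisation problem (31) that pins down the Lagrange multipliers, this determines $f^0,g^0$.

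The main technical obstacle is the Gâteaux-derivative calculation itself: one must keep careful track of the non-commuting matrix factors $(p^0(\lambda))^{-1}$ on either side of the $f$- and $g$-insertions, and verify that the apparent cross-contributions from the two integrals of (33) combine cleanly so that $K_f$ depends solely on $C^{f0}_{\overline\mu}$ and $K_g$ solely on $C^{g0}_{\overline\mu}$. The matrix subdifferential of the pointwise bound constraints in ${\md D_V^U}^k$ must also be justified class by class, using that those bounds are convex cones of Hermitian (or scalar, diagonal, or bilinear) inequalities. Once the kernels $K_f, K_g$ are in hand, the remainder is an algebraic rearrangement, and the identification of the minimax-robust spectral characteristic by formula (17) follows directly from Lemma 4.1.
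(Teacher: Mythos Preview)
Your plan is correct and follows exactly the route the paper takes: the paper does not give a separate proof of this theorem but simply records that equations (35)--(42) are obtained from the condition $0\in\partial\Delta_{\mathcal D}(f^0,g^0)$ applied to the functional (33), together with Lemma~4.1 and the saddle-point reduction to (32)--(34). Your outline is in fact more detailed than the paper's own treatment; just be careful with the scalar weights when you carry it out---the $f$-kernel carries the factor $|\beta^{(d)}(i\lambda)|^{2}/|\chi_{\overline\mu}^{(d)}(e^{-i\lambda})|^{2}$ (not its reciprocal), and the $g$-kernel carries $|\beta^{(d)}(i\lambda)|^{4}/|\chi_{\overline\mu}^{(d)}(e^{-i\lambda})|^{4}$, which is what produces the squared factor $\bigl(|\chi|^{2}|\beta|^{-2}p^0\bigr)$ on each side in (35)--(42).
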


\subsection{Semi-uncertain filtering problem in classes $\md D_{\eps}$ of least favorable noise spectral density}

Consider a semi-uncertain filtering problem for the functional $A\vec{\xi}$, where the spectral density $f(\lambda)$ of the  sequence with GM increments $\vec \xi(m)$ is known and   the spectral density $g(\lambda)$ of the stationary noise sequence  $\vec \eta(m)$ belongs to the
   sets of admissible spectral densities $\md D_{\eps}^k,k=1,2,3,4$
   \begin{equation*}
\md D_{\varepsilon }^{1}  =\bigg\{g(\lambda )\bigg|{\mathrm{Tr}}\,
[g(\lambda )]=(1-\varepsilon ) {\mathrm{Tr}}\,  [g_{1} (\lambda
)]+\varepsilon {\mathrm{Tr}}\,  [W(\lambda )],
\frac{1}{2\pi} \int _{-\pi}^{\pi}
{\mathrm{Tr}}\,
[g(\lambda )]d\lambda =q \bigg\};
\end{equation*}
\begin{equation*}
\md D_{\varepsilon }^{2}  =\bigg\{g(\lambda )\bigg|g_{kk} (\lambda)
=(1-\varepsilon )g_{kk}^{1} (\lambda )+\varepsilon w_{kk}(\lambda),
\frac{1}{2\pi} \int _{-\pi}^{\pi}
g_{kk} (\lambda)d\lambda  =q_{k} , k=\overline{1,T}\bigg\};
\end{equation*}
\begin{equation*}
\md D_{\varepsilon }^{3} =\bigg\{g(\lambda )\bigg|\left\langle B_{1},g(\lambda )\right\rangle =(1-\varepsilon )\left\langle B_{2},g_{1} (\lambda )\right\rangle+\varepsilon \left\langle B_{2},W(\lambda )\right\rangle,
\frac{1}{2\pi}\int _{-\pi}^{\pi}
\left\langle B_{2} ,g(\lambda )\right\rangle d\lambda =q\bigg\};
\end{equation*}
\begin{equation*}
\md D_{\varepsilon }^{4}=\bigg\{g(\lambda )\bigg|g(\lambda)=(1-\varepsilon )g_{1} (\lambda )+\varepsilon W(\lambda ),
\frac{1}{2\pi } \int _{-\pi}^{\pi}
g(\lambda )d\lambda=Q\bigg\},
\end{equation*}
where  $g_{1} ( \lambda )$ is a fixed spectral density, $W(\lambda)$ is an unknown spectral density, $q,  q_k, k=\overline{1,T}$, are given numbers, $Q, B_2$ are  given positive-definite Hermitian matrices.

The condition $0\in\partial\Delta_{\mathcal{D}}(f,g^0)$
implies the equations which determine the least favourable spectral densities of the noise sequence $\vec \eta(m)$.

Define
\[C^{g,semi}_{\overline{\mu}}(e^{i\lambda}):=\frac{|\chi_{\overline{\mu}}^{(d)}(e^{-i\lambda})|^2}{|\beta^{(d)}(i\lambda)|^2}(A(e^{-i\lambda }))^{\top} f(\lambda)+\chi_{\overline{\mu}}^{(d)}(e^{-i\lambda})(C^{semi}_{\overline{\mu}}(e^{i\lambda}))^{\top},
\]
where $C^{semi}_{\overline{\mu}}(e^{i\lambda})$ is determined by the matrices $\me P_{\overline{\mu}}^{semi}$, $\me S_{\overline{\mu}}^{semi}$  defined by
the Fourier coefficients of the functions
\[
 |\beta^{(d)}(i\lambda)|^2|\chi_{\overline{\mu}}^{(d)}(e^{-i\lambda})|^{-2}(f(\lambda)+|\beta^{(d)}(i\lambda)|^2g^0(\lambda))^{-1}, \]
\[
 |\beta^{(d)}(i\lambda)|^2|\chi_{\overline{\mu}}^{(d)}(e^{-i\lambda})|^{-2}g^0(\lambda)(f(\lambda)+|\beta^{(d)}(i\lambda)|^2g^0(\lambda))^{-1}.\]

For the  sets of admissible spectral densities $\md D_{\varepsilon }^{k}$, $k=1,2,3,4$, we have equations, respectively,
\begin{equation}\label{eq_6_1_semi}
  C^{g,semi}_{\overline{\mu}}(e^{i\lambda})C^{g,semi}_{\overline{\mu}}(e^{i\lambda})^{*}
=
(\beta^{2} +\gamma_2(\lambda ))\left(\frac{|\chi_{\overline{\mu}}^{(d)}(e^{-i\lambda})|^2}{|\beta^{(d)}(i\lambda)|^2} \ld(f(\lambda)+|\beta^{(d)}(i\lambda)|^2g^0(\lambda)\rd)\right)^{2},
\end{equation}
where   $\beta^{2}$ is a Lagrange multiplier, $\gamma_2(\lambda )\le 0$ and $\gamma_2(\lambda )=0$ if ${\mathrm{Tr}}\,[g^0(\lambda )]>(1-\varepsilon ) {\mathrm{Tr}}\, [g_{1} (\lambda )]$,
\begin{multline}   \label{eq_6_2_semi}
C^{g,semi}_{\overline{\mu}}(e^{i\lambda})C^{g,semi}_{\overline{\mu}}(e^{i\lambda})^{*}
=
 \frac{|\chi_{\overline{\mu}}^{(d)}(e^{-i\lambda})|^2}{|\beta^{(d)}(i\lambda)|^2} \ld(f(\lambda)+|\beta^{(d)}(i\lambda)|^2g^0(\lambda)\rd)
\\
\times\left\{(\beta_{k}^{2} +\gamma_{k}^2 (\lambda ))\delta _{kl} \right\}_{k,l=1}^{T}  \frac{|\chi_{\overline{\mu}}^{(d)}(e^{-i\lambda})|^2}{|\beta^{(d)}(i\lambda)|^2} \ld(f(\lambda)+|\beta^{(d)}(i\lambda)|^2g^0(\lambda)\rd) ,
\end{multline}
where   $\beta_{k}^{2}$ are Lagrange multipliers, $\gamma_{k}^2(\lambda )\le 0$ and $\gamma_{k}^2 (\lambda )=0$ if $g_{kk}^{0}(\lambda )>(1-\varepsilon )g_{kk}^{1} (\lambda )$,
\begin{multline}   \label{eq_6_3_semi}
C^{g,semi}_{\overline{\mu}}(e^{i\lambda})C^{g,semi}_{\overline{\mu}}(e^{i\lambda})^{*}
=
(\beta^{2} +\gamma_2'(\lambda )) \frac{|\chi_{\overline{\mu}}^{(d)}(e^{-i\lambda})|^2}{|\beta^{(d)}(i\lambda)|^2} \ld(f(\lambda)+|\beta^{(d)}(i\lambda)|^2g^0(\lambda)\rd)
\\
\times B_{2}^{\top}  \frac{|\chi_{\overline{\mu}}^{(d)}(e^{-i\lambda})|^2}{|\beta^{(d)}(i\lambda)|^2} \ld(f(\lambda)+|\beta^{(d)}(i\lambda)|^2g^0(\lambda)\rd),
\end{multline}
where  $\beta^{2}$ is a Lagrange multiplier, $\gamma_2' ( \lambda )\le 0$ and $\gamma_2' ( \lambda )=0$ if $\langle B_{2},g^0(\lambda) \rangle>(1-\varepsilon) \langle B_{2} ,g_{1}(\lambda) \rangle$,
\begin{multline}  \label{eq_6_4_semi}
C^{g,semi}_{\overline{\mu}}(e^{i\lambda})C^{g,semi}_{\overline{\mu}}(e^{i\lambda})^{*}
=
\frac{|\chi_{\overline{\mu}}^{(d)}(e^{-i\lambda})|^2}{|\beta^{(d)}(i\lambda)|^2} \ld(f(\lambda)+|\beta^{(d)}(i\lambda)|^2g^0(\lambda)\rd)
\\
\times(\vec{\beta}\cdot \vec{\beta}^{*}+\Gamma(\lambda)) \frac{|\chi_{\overline{\mu}}^{(d)}(e^{-i\lambda})|^2}{|\beta^{(d)}(i\lambda)|^2} \ld(f(\lambda)+|\beta^{(d)}(i\lambda)|^2g^0(\lambda)\rd),
\end{multline}
where   $\vec{\beta}$ is a vector of Lagrange multipliers, $\Gamma(\lambda )\le 0$ and $\Gamma(\lambda )=0$ if $g^0(\lambda )>(1-\varepsilon )g_{1} (\lambda )$.

The following theorem  holds true.

\begin{thm}
Let the minimality condition (13) hold true. The least favorable spectral density  $g^0(\lambda)$ in the classes $ \md D_{\varepsilon }^{k}$, $k=1,2,3,4$, for the optimal linear filtering  of the functional  $A\vec{\xi}$, when the spectral density $f(\lambda)$ is known,   is determined by   equations (43), (44), (45), (46),
the constrained optimization problem (31) for the fixed $f(\lambda)$ and the restrictions  on density from the corresponding classes $  \md D_{\varepsilon }^{k}$, $k=1,2,3,4$.  The minimax-robust spectral characteristic $h_{\overline{\mu}}(f,g^0)$ of the optimal estimate of the functional $A\vec{\xi}$ is determined by the formula (17).
\end{thm}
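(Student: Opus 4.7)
The plan is to derive the theorem as a specialization of the minimax framework established in Lemma 4.1 and the surrounding discussion to the semi-uncertain setting, where only $g$ is unknown while $f$ is fixed. First I would observe that when $f$ is held fixed, the saddle-point characterization $0\in\partial\Delta_{\md D}(f,g^0)$ reduces to the one-sided condition that $g^0$ minimizes the convex functional $g\mapsto -\Delta(h_{\overline{\mu}}(f,g^0);f,g)+\delta(g\mid \md D_{\varepsilon}^k)$. The structure of $\Delta(h_{\overline{\mu}}(f,g^0);f,g)$ displayed in (33) is linear in $g$ (once $g^0$ is frozen inside $h_{\overline{\mu}}(f,g^0)$), so its Gateaux derivative with respect to $g$ is obtained by reading off the quadratic form in $g$ and isolating the integrand that multiplies $g(\lambda)$.

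Second, I would compute that integrand explicitly. The term contributing the $g$-dependence in (33) is the one carrying $C^{g,semi}_{\overline{\mu}}(e^{i\lambda})(p^0)^{-1}g(p^0)^{-1}C^{g,semi}_{\overline{\mu}}(e^{i\lambda})^{*}$ with $p^0=f+|\beta^{(d)}|^2 g^0$, so the Frechet derivative of $-\Delta(h_{\overline{\mu}}(f,g^0);f,g)$ with respect to $g$ at $g=g^0$ is
\[
-\,|\beta^{(d)}(i\lambda)|^{-2}(p^0(\lambda))^{-1}C^{g,semi}_{\overline{\mu}}(e^{i\lambda})C^{g,semi}_{\overline{\mu}}(e^{i\lambda})^{*}(p^0(\lambda))^{-1}
\]
up to an obvious constant. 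Setting this equal (in the subdifferential sense) to an element of the normal cone to $\md D_{\varepsilon}^k$ at $g^0$ yields equations of the form $C^{g,semi}_{\overline{\mu}}C^{g,semi*}_{\overline{\mu}}=(\cdot)\,|\chi_{\overline{\mu}}^{(d)}|^{2}|\beta^{(d)}|^{-2}p^0 \cdot (\cdot) \cdot|\chi_{\overline{\mu}}^{(d)}|^{2}|\beta^{(d)}|^{-2}p^0$, which is exactly the skeleton of (43)–(46).

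Third, I would evaluate the subdifferential of the indicator $\delta(\cdot\mid\md D_{\varepsilon}^k)$ separately in each of the four cases, since they differ in how the $\varepsilon$-contamination constraint $g=(1-\varepsilon)g_1+\varepsilon W$ (or its trace/diagonal/Hermitian-form version) interacts with the normalization integral. Following the standard arguments (as in references [10, 33, 34, 44] invoked in the paper), the Lagrange multiplier for the integral constraint contributes the $\beta^2$ (or $\beta_k^2$, or $\vec{\beta}$) terms, while the one-sided restriction $W(\lambda)\geq 0$ produces the nonpositive measurable selections $\gamma_2(\lambda)$, $\gamma_k^2(\lambda)$, $\gamma_2'(\lambda)$, $\Gamma(\lambda)$ with the complementary slackness conditions on the set $\{g^0>(1-\varepsilon)g_1\}$, $\{g^0_{kk}>(1-\varepsilon)g^1_{kk}\}$, and so on. Assembling the four cases gives equations (43), (44), (45), (46) respectively.

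Finally, the minimax spectral characteristic claim follows from Lemma 4.1: once $g^0$ is shown to be least favorable and the minimality condition (13) holds so that $h_{\overline{\mu}}(f,g^0)$ is well defined and lies in $H_{\md D}$, formula (17) evaluated at $(f,g^0)$ is the minimax-robust spectral characteristic. The main obstacle I expect is a careful justification of the subdifferential calculus for the constraints involving the trace, diagonal entries, and Hermitian bilinear form $\langle B_2,g\rangle$ on the cone of nonnegative-definite matrix-valued spectral densities, in particular ensuring that the one-sided multipliers $\gamma_2$, $\gamma_k^2$, $\gamma_2'$, $\Gamma$ inherit the correct sign and support properties; this is essentially the same technical point treated in [33, 34] and can be invoked rather than redone in detail.
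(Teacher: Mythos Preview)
Your proposal is correct and follows essentially the same approach as the paper: the paper derives equations (43)--(46) directly from the subdifferential condition $0\in\partial\Delta_{\mathcal D}(f,g^0)$ applied to the functional (33) with $f$ fixed, citing [10, 33, 34, 44] for the Lagrange/normal-cone calculus on each class $\mathcal D_{\varepsilon}^{k}$, and then invokes Lemma~4.1 for the minimax spectral characteristic --- exactly the route you outline. Your write-up is in fact more explicit than the paper, which simply asserts that the condition ``implies the equations'' and states the theorem without further detail.
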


\section*{Appendix}

\emph{Proof of Theorem 3.1.}

The condition $(A\vec\eta-\widehat{A}\vec\eta)\perp
H^{0}(\xi^{(d)}+\eta^{(d)})$ implies   the relation which holds true for all $k\leq0$:
\begin{eqnarray*}
 &&\mt E\ld(A\vec\eta-\widehat{A}\vec\eta\rd)^{\top}
 \ld(\overline{\chi_{\overline{\mu},\overline{s}}^{(d)}(\vec{\xi}(m))
+\chi_{\overline{\mu},\overline{s}}^{(d)}(\vec{\eta}(m))}\rd)
 \\
 &=&\frac{1}{2\pi}\ip
 \ld(A(e^{-i\lambda })-\beta^{(d)}(i\lambda)\vec h_{\overline{\mu}}(\lambda)\rd)^{\top}g(\lambda) e^{-i\lambda
 k}\chi_{\overline{\mu}}^{(d)}(e^{i\lambda})d\lambda
 \\
 &&-\frac{1}{2\pi}\ip (\vec h_{\overline{\mu}}(\lambda))^{\top}f(\lambda)e^{-i\lambda
 k}\chi_{\overline{\mu}}^{(d)}(e^{i\lambda})\frac{1}{\overline{\beta^{(d)}(i\lambda)}}d\lambda=0.\end{eqnarray*}
Thus, for all $k\leq0$, the function $\vec h_{\overline{\mu}}(\lambda)$ satisfies the relation
\[
 \ip\ld[(A(e^{-i\lambda}))^{\top}g(\lambda)\overline{\beta^{(d)}(i\lambda)}-(\vec h_{\overline{\mu}}(\lambda))^{\top}
 p(\lambda)\rd]\frac{\chi_{\overline{\mu}}^{(d)}(e^{i\lambda})}{\overline{\beta^{(d)}(i\lambda)}}e^{-i\lambda
 k}d\lambda=\vec 0.\]
 The latter relation implies  that   the spectral characteristic
$\vec h_{\overline{\mu}}(\lambda)$ allows a representation of the form (17) with
\[
C_{\overline{\mu}}(e^{i\lambda})=\sum_{k=0}^{\infty}\vec c_{\overline{\mu}}(k)e^{i\lambda
(k+1)},
\]
where $\vec c_{\overline{\mu}}(k)=\{c_{\overline{\mu} p}(k)\}_{p=1}^T$, $k\geq0$, are unknown coefficients to be found.

The condition $ \widehat{A}\vec\eta\in H^{0}(\xi^{(d)}+\eta^{(d)}) $ implies the following representation of  the
spectral characteristic $\vec h_{\overline{\mu}}(\lambda)$:
\[\vec h_{\overline{\mu}}(\lambda)=h (\lambda)\chi_{\overline{\mu}}^{(d)}(e^{-i\lambda})
\frac{1}{\beta^{(d)}(i\lambda)}, \quad h (\lambda)=
\sum_{k=0}^{\infty}\vec s(k)e^{-i\lambda k},\]
which leads to  the relation holding true for all $l\geq 1$:
\be
 \ip \ld[ \chi_{\overline{\mu}}^{(d)}(e^{i\lambda})(A(e^{-i\lambda
 }))^{\top}g(\lambda)
 - (C_{\overline{\mu}}(e^{i\lambda}))^{\top}\rd]p^{-1}(\lambda)
\frac{|\beta^{(d)}(i\lambda)|^2}{|\chi_{\overline{\mu}}^{(d)}(e^{-i\lambda})|^2
 }e^{-i\lambda
 l}d\lambda=\vec 0.\label{spivv_um_1_f_st.n_d}\ee

In terms of the  matrix-valued Fourier coefficients $S^{\overline{\mu}}_{k,j}$ for $k\geq0,j\geq-n(\gamma)$, $P_{k,j}^{\overline{\mu}}$, $Q_{k,j}$ for $k,j\geq0$,  relation (47) is presented as a system of linear equations
\[
 \sum_{ m=-n(\gamma)}^{\infty}S^{\overline{\mu}}_{l+1,m}\vec a_{-\overline{\mu}}(m)
 =\sum_{k=0}^{\infty}P_{l+1,k+1}^{\overline{\mu}}\vec c_{\overline{\mu}}(k),\quad l\geq0,\]
or in the matrix form as
\[
 \me S_{\overline{\mu}}\me a_{\overline{\mu}}=\me P_{\overline{\mu}}\me c_{\overline{\mu}},\]
where the vectors $$\me c_{\overline{\mu}}=((\vec c_{\overline{\mu}}(0))^{\top},(\vec c_{\overline{\mu}}(1))^{\top},(\vec c_{\mu }(2))^{\top}, \ldots)^{\top}, \quad \me a_{\overline{\mu}}=((\vec a_{\overline{\mu}}(0))^{\top},
(\vec a_{\overline{\mu}}(1))^{\top},(\vec a_{\overline{\mu}}(2))^{\top},\ldots)^{\top},$$ the coefficients $ a_{\overline{\mu}}(k)=a_{-\mu}(k-n(\gamma))$, $k\geq0$ and the linear operators $\me P_{\overline{\mu}}$, $\me S_{\overline{\mu}}$ are    defined in  Subsection 3.1.
 Therefore, the unknown coefficients $\vec c_{\overline{\mu}}(k)$, $k\geq0$, are calculated by the formula
\[
 \vec c_{\overline{\mu}}(k)=\ld(\me P_{\overline{\mu}}^{-1}\me S_{\overline{\mu}}\me a_{\overline{\mu}}\rd)_k,\quad k\geq 0,\]
where $\ld(\me
P_{\overline{\mu}}^{-1}\me S_{\overline{\mu}}\me a_{\overline{\mu}}\rd)_k$, $k\geq 0$, is the
 $k$th $T$-dimensional vector element of the vector $\me P_{\overline{\mu}}^{-1}\me S_{\overline{\mu}}\me a_{\overline{\mu}}$.

The derived expressions justifies the formulas (17) and (18) for calculating the spectral characteristic $\vec h_{\overline{\mu}}(\lambda)$ and the value of the mean square error $\Delta\ld(f,g;\widehat{A}\vec\xi\rd)$ of the estimate $\widehat{A}\vec\xi$ of the functional $A\vec\xi$. $\square$

\end{document}